 \newtheorem{thm}{Theorem}[section]
 \newtheorem{lem}[thm]{Lemma}
 \theoremstyle{definition}
 \theoremstyle{remark}
 \numberwithin{equation}{subsection}
\newcommand{\inder}[2]{{\rm B^1}\left({#1},{#2}\right)}
\newcommand{\der}[2]{{\rm Der}({#1},{#2})}
\newcommand{\ho}[2]{{\rm Hom}({#1},{#2})}
\newcommand{\se}[1]{\big\{#1\big\}}
\newcommand{\cent}{{Z}(G)}
\newcommand{\frat}{{\rm \Phi}}
\newcommand{\om}[1]{\Omega_1({#1})}
\newcommand{\oms}[1]{\Omega_1^{\star}\left({#1}\right)}
\newcommand{\sder}[3]{{\rm Der}_{#1}({#2},{#3})}
\newcommand{\supp}[1]{{\rm supp}({#1})}
\newcommand{\di}[1]{{\rm{d}}\left({#1}\right)}
\begin{document}
\title[noninner automorphisms of order $p$]
{Finite $p$-groups of class 3 with
 noninner automorphisms of order $p$}
\author{Alireza Abdollahi}
\address{Department
 of Mathematics, University of Isfahan, Isfahan 81746-73441,
Iran; School of Mathematics, Institute for Research in Fundamental Sciences
(IPM), P.O.Box: 19395-5746, Tehran, Iran.}
\email{a.abdollahi@math.ui.ac.ir}
\author{Mohsen Ghoraishi}
\address{Department  of Mathematics, University of Isfahan, Isfahan 81746-73441,
Iran.}
\email{ghoraishi@gmail.com}
\thanks{This research was in part supported by a grant from IPM (No. 89200218)}
\subjclass[2000]{20D15; 20D45}
\keywords{Finite $p$-groups; non-inner automorphisms;}
\begin{abstract}
A longstanding conjecture asserts that every non-abelian finite
$p$-group $G$ admits a non-inner automorphism of order $p$.
The conjecture is valid for finite $p$-groups of class 2.
Here, we prove every finite non-abelian $p$-group $G$ of class $3$ with $p>2$
has a noninner automorphism of order $p$ leaving $\Phi(G)$ elementwise fixed. We also prove that if $G$ is a finite $2$-group of class $3$ which cannot be generated by $4$ elements,  then $G$ has a non-inner automorphism of order $2$ leaving $\Phi(G)$ elementwise fixed. We also prove that the  latter conclusion
 holds for finite $2$-groups $G$ of class $3$ such that the center of $G$ is not cyclic and the minimal number of generators of $G$ is  $2$ or $4$ and it holds whenever  the center of $G$ is {\em not} $2$-generated and the minimal number of generators of $G$ is $3$.
Some  results are also proved for the existence of non-inner automorphisms of order $p$ for a finite $p$-group $G$  under conditions in terms of the minimal number of generators of the center factor of $G$ and a certain function of the rank of $G$.
\end{abstract}
\maketitle
{\setlength{\baselineskip}{1.5\baselineskip}
\section{Introduction}
Let $G$ be a non-abelian finite  $p$-group. By a
celebrated result of Gasch\"{u}tz $G$ admits non-inner
automorphisms  of $p$-power order \cite{G}.  But the existence of a
non-inner automorphism of order $p$ for $G$  (Problem 4.13 of  \cite{Kbook})
has  faced  no counterexample yet.
Affirmative answers have been given to the problem
 whenever $G$ is nilpotent of class $2$ \cite{A,L}, $G$ is  regular
\cite{S,DS}, $G/Z(G)$ is powerful \cite{AB} and
 $C_G(Z(\frat(G)))\neq\frat(G)$ \cite{DS}.

Our main results imply that the title of our paper is valid.
The outline of the paper is as follows.

In Section 2, we look
 for  criteria to determine derivations from an
abelian $p$-group to an elementary abelian one,
 which yields  us a tool for finding a
lower bound for the size of certain subgroups of  group of derivations.

In Section 3, in
 order to find a non-inner automorphism of order $p$, we  compare the
order of certain subgroups of  derivations with the corresponding
subgroup of inner derivations.

Section 4 is devoted to prove the results indicated in the abstract.

Throughout the paper $p$ denotes a prime number.
 $\cent$, $G'$, $\frat(G)$ (or for short $\frat$), ${\rm cl}(G)$,
 ${\rm d}(G)$, and  ${\rm rk}(G)$
denote
 the center, the derived subgroup, the Frattini subgroup, the
nilpotency class and the minimum number of generators, respectively. If $G$
is a $p$-group, $\om{G}$ denotes
the subgroup generated by elements of order $p$ and $\oms G=\langle x\in
G|~x^p\in
Z(G)\rangle$. If $A$ is a  $G$-module,  $\der{G}{A}$ and $\inder{G}{A}$
 denote the groups of all derivations and inner derivations
 from $G$ to $A$, respectively.
If $a\in A$ and  $x\in G$ then $a^{1+x+\dots+x^{j}}$ denotes
$aa^x\cdots a^{x^{j}}$.
 The trace endomorphism induced by $x$ is denoted by
$\tau_x$ that sends $a\in A$ to
$a^{1+x+\dots +x^{n-1}}$ where $n=o(x)$.
\section{\bf An Estimation for the Minimum Number of Generators of $\der{G}{A}$}
We need the following lemma from \cite{C} to find a way to
determine all derivations from an abelian group $G$ to a $G$-module  $A$.
\begin{lem}[Lemma 1.2 of \cite{C}]\label{cut}
  Let $G=U\times V$ be a group, and let $A$ be a $G$-module. Let
$\alpha$ and $\beta$ be derivations from $U$ and $V$ respectively
to $A$. Then $\alpha$ and $\beta$ have a common extension to a
derivation from $G$ to $A$ if and only if $[u^\alpha, v]=[v^\beta,
u]$ for all $u \in U$ and $v \in V$. In particular, if $[U,A] = 1$
then $\der{G}{A}\cong\ho{U}{C_A(G)}\times \der{V}{A}$.
\end{lem}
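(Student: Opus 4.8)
The plan is to argue directly from the cocycle identity, using two features of a direct product: every element of $G=U\times V$ is uniquely $uv$ with $u\in U$, $v\in V$, and $U$ and $V$ commute elementwise in $G$. Recall that a map $\delta\colon G\to A$ is a derivation exactly when $\delta(gh)=\delta(g)^h\,\delta(h)$ for all $g,h\in G$, and that the restriction of a derivation to a subgroup is again a derivation; so any common extension $\delta$ of $\alpha,\beta$ does restrict to $\alpha$ on $U$ and $\beta$ on $V$, and the content is the reverse implication. For necessity of the stated condition, if $\delta$ is such an extension then comparing $\delta(uv)$ with $\delta(vu)$ and using $uv=vu$ in $G$ gives $(u^\alpha)^v\,v^\beta=(v^\beta)^u\,u^\alpha$; since $A$ is abelian this rearranges to $[u^\alpha,v]=(u^\alpha)^{-1}(u^\alpha)^v=(v^\beta)^{-1}(v^\beta)^u=[v^\beta,u]$. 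The same computation pins down the only possible extension, namely $\delta(uv):=(u^\alpha)^v\cdot v^\beta$, so uniqueness of the common extension will be automatic.

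For sufficiency I would \emph{take} $\delta(uv):=(u^\alpha)^v\cdot v^\beta$ as the definition and verify $\delta(g_1g_2)=\delta(g_1)^{g_2}\,\delta(g_2)$ for $g_i=u_iv_i$. Writing $g_1g_2=u_1u_2\,v_1v_2$ (here one uses $v_1u_2=u_2v_1$ in $G$) and expanding both sides via the derivation identities for $\alpha$ on $U$ and $\beta$ on $V$, the terms carrying $u_1^\alpha$ agree once their exponents are re-aligned through $v_1u_2=u_2v_1$, the terms carrying $v_2^\beta$ agree trivially, and cancelling these leaves exactly the equation $[u_2^\alpha,v_1]=[v_1^\beta,u_2]$ — i.e. the hypothesis. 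Setting $v=1$ and $u=1$ then gives $\delta|_U=\alpha$ and $\delta|_V=\beta$. I expect this verification to be the only slightly fiddly point: the whole lemma is formal cocycle bookkeeping, and there is no genuine obstacle, but one must track the $G$-action exponents carefully and invoke $[U,V]=1$ in $G$ at the right moments to collapse the identity to the hypothesis.

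For the ``in particular'' clause, assume $[U,A]=1$. The trivial action of $U$ makes every derivation $U\to A$ a homomorphism, so $\der{U}{A}=\ho{U}{A}$; moreover $[v^\beta,u]=1$ for all $u\in U$, so the compatibility condition reduces to $[u^\alpha,v]=1$ for all $u\in U$, $v\in V$, that is $u^\alpha\in C_A(V)$. Combined with $u^\alpha\in A=C_A(U)$ and $C_A(U)\cap C_A(V)=C_A(G)$ (since $G=UV$), this says precisely that $\alpha\in\ho{U}{C_A(G)}$, while $\beta$ ranges freely over $\der{V}{A}$. Hence the restriction map $\delta\mapsto(\delta|_U,\delta|_V)$ is a homomorphism for the pointwise operations ($A$ being abelian), injective by the uniqueness established above, and, by the first part, surjective onto $\ho{U}{C_A(G)}\times\der{V}{A}$; this is the asserted isomorphism.
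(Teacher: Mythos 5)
Your proof is correct. Note that the paper itself gives no proof of this statement --- it is imported verbatim as Lemma 1.2 of Cutolo \cite{C} --- and your argument (the forced formula $\delta(uv)=(u^\alpha)^v\,v^\beta$ for any common extension, the verification that the cocycle identity for this $\delta$ collapses, via $[U,V]=1$ and commutativity of $A$, exactly to the condition $[u^\alpha,v]=[v^\beta,u]$, and the resulting bijective restriction homomorphism in the $[U,A]=1$ case) is the standard one and checks out.
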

The following lemma can be proved by using Lemma \ref{cut}.
\begin{lem}\label{der}
Suppose that $G=\langle x_1\rangle\times\dots\times\langle x_n\rangle$ is a
finite abelian group and
$A$ is a  $G$-module. Then the mapping
$\delta:{\big|}
\begin{smallmatrix}
x_i\rightarrow b_i\\
1\leq i\leq n
\end{smallmatrix}
$
can be extended to  an element of  
$\der{G}{A}$ if and only if
\begin{align*}
\left\{
\begin{matrix}
b_i\in\ker{\tau_{x_i}}~~~~~~~~~~~~~~~~~~~&1\leq i\leq n\\
[b_i,x_j]=[b_j,x_i] & 1\leq i,j\leq n
\end{matrix}
\right.
\end{align*}
\end{lem}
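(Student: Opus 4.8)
The plan is to establish necessity of the two conditions directly and their sufficiency by induction on $n$, assembling a derivation of $G$ from derivations of the cyclic factors via Lemma~\ref{cut}. Necessity needs no induction: if $\delta\in\der{G}{A}$ realises the assignment $x_i\mapsto b_i$, then $1=\delta(x_i^{o(x_i)})=b_i^{1+x_i+\dots+x_i^{o(x_i)-1}}$, which is exactly $b_i\in\ker\tau_{x_i}$; and evaluating the derivation identity on $x_ix_j=x_jx_i$ gives $b_i^{x_j}b_j=\delta(x_ix_j)=\delta(x_jx_i)=b_j^{x_i}b_i$, which, $A$ being abelian, rearranges to $[b_i,x_j]=[b_j,x_i]$.

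For sufficiency, the case $n=1$ is the standard description of $\der{\langle x_1\rangle}{A}$: the derivation identity forces $x_1^k\mapsto b_1^{1+x_1+\dots+x_1^{k-1}}$, and this is a well-defined derivation of $\langle x_1\rangle$ exactly when its value at $k=o(x_1)$ is trivial, i.e.\ when $b_1\in\ker\tau_{x_1}$. For the inductive step I would write $G=U\times V$ with $U=\langle x_1\rangle\times\dots\times\langle x_{n-1}\rangle$ and $V=\langle x_n\rangle$. Granting the two displayed conditions, the conditions they impose on $b_1,\dots,b_{n-1}$ are precisely those required by the induction hypothesis, so there is $\alpha\in\der{U}{A}$ with $x_i^\alpha=b_i$ for $i<n$, and the case $n=1$ provides $\beta\in\der{V}{A}$ with $x_n^\beta=b_n$. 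By Lemma~\ref{cut} these have a common extension to a derivation of $G$ if and only if $[u^\alpha,v]=[v^\beta,u]$ for all $u\in U$, $v\in V$, so this is all that remains.

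To verify it I would work with $A$ as a $\mathbb{Z}[G]$-module, writing the action multiplicatively as in the paper's notation $a^{1+x+\cdots}$, so that $[a,g]=a^{g-1}$ and derivations satisfy $\delta(gh)=\delta(g)^h\delta(h)$. For $u=x_1^{k_1}\cdots x_{n-1}^{k_{n-1}}$ and $v=x_n^r$, iterating the derivation identity over the factors of $u$ gives $u^\alpha=\prod_{i=1}^{n-1}b_i^{\sigma_i}$ with $\sigma_i=\big(1+x_i+\dots+x_i^{k_i-1}\big)x_{i+1}^{k_{i+1}}\cdots x_{n-1}^{k_{n-1}}\in\mathbb{Z}[G]$, while in $\mathbb{Z}[G]$ one has the telescoping factorizations $u-1=\sum_{i=1}^{n-1}(x_i-1)\sigma_i$ and $v-1=(x_n-1)\big(1+x_n+\dots+x_n^{r-1}\big)$, and $v^\beta=b_n^{1+x_n+\dots+x_n^{r-1}}$. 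Since $G$ is abelian the ring $\mathbb{Z}[G]$ is commutative, and so
\[
[u^\alpha,v]=\prod_{i=1}^{n-1}[b_i,x_n]^{\lambda_i},\qquad [v^\beta,u]=\prod_{i=1}^{n-1}[b_n,x_i]^{\lambda_i},
\]
with the same exponents $\lambda_i=\big(1+x_n+\dots+x_n^{r-1}\big)\sigma_i$. Hence the relations $[b_i,x_n]=[b_n,x_i]$ for $i<n$ make the two products coincide factor by factor, completing the induction.

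The one genuinely computational point, and the step I would handle with the most care, is this reduction of Carlson's gluing condition on all of $U\times V$ to the generator-level commutator relations: recording $u^\alpha$ correctly as $\prod_i b_i^{\sigma_i}$, factoring $u-1$ and $v-1$ compatibly inside $\mathbb{Z}[G]$, and using commutativity of $\mathbb{Z}[G]$ to line the two products up. The base case, the necessity direction, and the appeal to Lemma~\ref{cut} are all immediate.
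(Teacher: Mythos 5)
Your proof is correct and follows exactly the route the paper indicates: the paper gives no written proof, only the remark that the lemma "can be proved by using Lemma \ref{cut}," and your argument is a sound fleshing-out of that — necessity from the derivation identity on $x_i^{o(x_i)}$ and on $x_ix_j=x_jx_i$, sufficiency by induction on the number of cyclic factors with Carlson--Cutolo gluing, and a correct $\mathbb{Z}[G]$-computation reducing the condition $[u^\alpha,v]=[v^\beta,u]$ to the generator-level relations. Nothing further is needed.
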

Suppose that  $G=\langle x_1\rangle\times\dots\times\langle x_n\rangle$ is an
abelian group and let
$A$ be a  $G$-module. If $\delta\in\der{G}{A}$, define
$\supp{\delta}$ to be $\se{{x_i}^\delta~|~1\leq i\leq n}$
 and for any non-empty subset $C$ of $A$, denote
$$\se{\delta\in\der{G}{A}|~ \supp{\delta}\subseteq C}$$
by $\sder{C}{G}{A}$. Recall that $\der{G}{A}$  is an abelian group with
pointwise multiplication of functions.
\begin{lem}\label{dimsder}
Let  $G=\langle x_1\rangle\times\cdots\times\langle x_n\rangle$ be a finite
abelian $p$-group and let $A$ be a finite
  elementary abelian  $p$-group  which is a  $G$-module.
 If  $C,D\leq A$ such that
 $\bigcup_{i=1}^n[C,x_i]\subseteq D$ then
\begin{align*}
{\rm d}(\sder{C}{G}{A})\geq \sum_{i=1}^n {\rm d}(\ker(\tau_{x_i})\cap
C)-\binom{n}{2}{\rm d}(D).
\end{align*}
\end{lem}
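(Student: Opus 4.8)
The plan is to realize $\sder{C}{G}{A}$ as the kernel of an explicit $\mathbb{F}_p$-linear map between finite elementary abelian $p$-groups and then apply the rank--nullity theorem. Since $A$ is elementary abelian, $\der{G}{A}$ with its pointwise multiplication is an $\mathbb{F}_p$-vector space, and whenever $C\leq A$ the subset $\sder{C}{G}{A}$ is an $\mathbb{F}_p$-subspace of it; moreover for a finite elementary abelian $p$-group $B$ one has ${\rm d}(B)=\dim_{\mathbb{F}_p}B$, so it is enough to bound dimensions. I would also record at the outset that for each $x\in G$ the maps $a\mapsto[a,x]=a^{-1}a^x$ and $a\mapsto\tau_x(a)$ are $\mathbb{F}_p$-linear endomorphisms of $A$ (they are the actions of the elements $x-1$ and $1+x+\cdots+x^{o(x)-1}$ of the group ring $\mathbb{F}_p[G]$ on the $\mathbb{F}_p[G]$-module $A$). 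In particular each $\ker\tau_{x_i}\cap C$ is an $\mathbb{F}_p$-subspace, and $V:=\bigoplus_{i=1}^n\bigl(\ker\tau_{x_i}\cap C\bigr)$ is a vector space of dimension $\sum_{i=1}^n{\rm d}(\ker\tau_{x_i}\cap C)$.

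Next I would use Lemma \ref{der} to identify $\sder{C}{G}{A}$ with a subspace of $V$. A derivation $\delta$ of $G$ into $A$ is determined by the values $b_i:=x_i^\delta$, and it lies in $\sder{C}{G}{A}$ exactly when all $b_i\in C$; by Lemma \ref{der} a tuple $(b_1,\dots,b_n)$ with all $b_i\in C$ arises from such a $\delta$ if and only if $b_i\in\ker\tau_{x_i}$ for every $i$ and $[b_i,x_j]=[b_j,x_i]$ for all $1\leq i<j\leq n$. Thus $\delta\mapsto(x_1^\delta,\dots,x_n^\delta)$ is an injective $\mathbb{F}_p$-linear map of $\sder{C}{G}{A}$ onto the solution set in $V$ of the equations $[b_i,x_j][b_j,x_i]^{-1}=1$, $1\leq i<j\leq n$. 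Since each $b_i\in C$ and, by hypothesis, $\bigcup_{i=1}^n[C,x_i]\subseteq D$, every term $[b_i,x_j]$ and $[b_j,x_i]$ lies in $D$; hence
$$\Theta:V\To D^{\binom{n}{2}},\qquad (b_1,\dots,b_n)\longmapsto\bigl([b_i,x_j][b_j,x_i]^{-1}\bigr)_{1\leq i<j\leq n}$$
is a well-defined $\mathbb{F}_p$-linear map, and the identification above shows $\sder{C}{G}{A}\cong\ker\Theta$.

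Finally, the rank--nullity theorem gives
$$\dim\sder{C}{G}{A}=\dim\ker\Theta=\dim V-\dim(\operatorname{im}\Theta)\geq\dim V-\binom{n}{2}\dim D,$$
and substituting $\dim V=\sum_{i=1}^n{\rm d}(\ker\tau_{x_i}\cap C)$ and $\dim D={\rm d}(D)$ yields the asserted inequality. I do not expect a genuine obstacle here: the argument is a rank--nullity count, and the only points requiring care are the routine verifications that all the groups in sight are honestly $\mathbb{F}_p$-vector spaces (so that ${\rm d}$ is additive and agrees with dimension) and that the hypothesis $\bigcup_{i=1}^n[C,x_i]\subseteq D$ is precisely what forces the image of $\Theta$ into $D^{\binom{n}{2}}$ rather than into the a priori larger $A^{\binom{n}{2}}$, which is where the term $\binom{n}{2}{\rm d}(D)$ comes from.
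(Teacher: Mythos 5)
Your proof is correct and is essentially the paper's own argument: both identify $\sder{C}{G}{A}$, via Lemma \ref{der}, with the kernel of a linear map from $\bigoplus_{i=1}^n(\ker(\tau_{x_i})\cap C)$ into a space of dimension $\binom{n}{2}{\rm d}(D)$ and then apply rank--nullity. The only difference is cosmetic: the paper fixes bases of $C$ and $D$ and writes the map as an explicit $\binom{n}{2}s\times rn$ matrix, whereas you work coordinate-free with the map $(b_1,\dots,b_n)\mapsto\bigl([b_i,x_j][b_j,x_i]^{-1}\bigr)_{i<j}$ into $D^{\binom{n}{2}}$.
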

\begin{proof}
Let $C=\langle c_1, \dots,  c_r\rangle$ and
$D=\langle d_1,\dots, d_s\rangle$, where $\text{rk}(C)=r$ and $\text{rk}(D)=s$.
Then
$[c_j,x_i]= d_1^{m^1_{ij}}\cdots d_s^{m^s_{ij}}$ for some $m^k_{ij}\in
\mathbb{Z}_p$.
Thus for each $1\leq q\leq s$ we have a fixed $n\times r$ matrix
$[m_{ij}^q]\in \text{Mat}_{n\times r}(\mathbb{Z}_p)$.
Let $$\bold b=(b_1,\dots,b_n)\in(\ker(\tau_{x_1})\cap C)\times\cdots\times
(\ker(\tau_{x_n})\cap C).$$ Then for each $i$, there exist
$t_{1i},\dots,t_{ri}\in \mathbb{Z}_p$ such that
$$b_i=c_1^{t_{1i}}\cdots c_r^{t_{ri}}.$$
 It is easy to see that the map
$$M:(\ker(\tau_{x_1})\cap C)\times\cdots\times (\ker(\tau_{x_n})\cap C)
\to \mathbb{Z}_p^{rn}$$
 that sends $\bold b$ to
$M_\bold{b}=[t_{11},\dots,t_{r1},\dots,t_{1n},\dots,t_{rn}]$ is a group
monomorphism.

By Lemma \ref{der}, $\bold b$ determines a unique element of $\sder{C}{G}{A}$
 if and only if
\begin{gather}
\bold b\in (\ker(\tau_{x_1})\cap C)\times\cdots\times (\ker(\tau_{x_n})\cap C)
\;\;\text{and}\;
\tag{a}\\
\left\{
\begin{matrix}
\sum^r_{l=1}m^q_{il}t_{lj}=\sum^r_{l=1}m^q_{jl}t_{li}\\
 \text{for all} \; q\in\{1,\dots,s\},\; \text{for all}\; i,j\in\{1,\dots,n\}
\end{matrix}\right.\tag{b}
\end{gather}

It follows from the equalities (b) that
$[t_{11},\dots,t_{r1},\dots,t_{1n},\dots,t_{rn}]^{T}$
is a solution for the following matrix equation
\begin{align*}
{\left[
{\LARGE
\begin{smallmatrix}
& \vdots& &\vdots &
& \vdots& &\vdots&&\vdots&
\\
\\
0&\cdots& 0 & m^q_{j1}~~~~~~~~~\cdots ~~~~~~~~~ m^q_{jr}
 &0&\cdots&0 &-m^q_{i1}~~~~~~~~~~~~~~\cdots~~~~~~~~~ -m^q_{ir}&0&\cdots&0
\\
& \vdots & & \vdots&
& \vdots & &\vdots&&\vdots&
\end{smallmatrix}
}
\right]}_{_{\binom{n}{2}s\times rn}}
X=0
\end{align*}
Denote the latter matrix of coefficients by $E$ and consider the linear
transformation
 $$E_{C,D}:(\ker(\tau_{x_1})\cap C)\times\cdots\times (\ker(\tau_{x_n})\cap C)
\to \mathbb Z_p^{\binom{n}{2}s},$$ which sends $\bold b$ to
$E\times{M_\bold{b}}^{T}$. The  null space of $E_{C,D}$ is isomorphic
to $\sder{C}{G}{A}$. Therefore
\begin{align*}
{\rm d}(\sder{C}{G}{A})&=\dim({\rm null}(E_{C,D}))\\
&=\dim\big( (\ker(\tau_{x_1})\cap C)\times\cdots\times (\ker(\tau_{x_n})\cap C)
\big)
-\dim({\rm Im}(E_{C,D})),
\end{align*}
 This completes the proof.
 \end{proof}
 \section{\bf Non-inner automorphisms of order $p$}
Throughout this section, suppose that $G$ is a finite non-abelian  $p$-group
such that $C_G(Z(\frat(G)))=\frat(G)$. Note that the latter condition implies
$C_G(\Phi(G))=Z(\frat(G))$. We also fix throughout this section the
following notation:

$n:=\di{G}$,  $\overline{G}:=\frac{G}{\frat(G)}=\langle\bar{x}_1\rangle \times
\cdots\times \langle \bar{x}_n\rangle$,

$A:=\om{Z(\frat(G))}$,  $A_i:=A\cap Z_i(G)$ for all $i\in\mathbb{N}$, $A_i:=1$
for all negative integers $i$,

$A^\star:=\oms G\cap Z(\frat(G))$,

and note that
$\di{A_i}=\di{A^\star\cap
Z_i(G)}=\di{Z(\frat) \cap Z_i(G)} \geq {\rm d}(\frac{Z(\frat) \cap
Z_i(G)}{Z(G)})$, and
$A_1=\om{\cent}$ as $\cent\leq Z(\frat(G))=C_G(\Phi(G))$.

Consider  $A$ as a   $\overline{G}$-modules, where $\overline{G}$ acts by
conjugation on $A$.

\begin{lem}\label{prop}
If $x\in G$ then the trace endomorphism $\tau_x:A\to A$
has the following properties:
\begin{enumerate}
\item
${\rm d}(\ker(\tau_x))\geq \frac{1}{2} {\rm d}(A)$.
\item
$\tau_x(a)=[a,_{p-1}x]$ for all $a\in A$.
\item
If $0\leq i\leq p-1$ then $A_i\leq \ker(\tau_x)$.
\item
${\rm d}(\ker(\tau_x)\cap A_i)\geq
{\rm d}(A_i)-{\rm d}(A_{i-p+1})$.
\end{enumerate}
\end{lem}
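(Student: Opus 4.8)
The plan is to work in the $\mathbb{F}_p$-vector space $A$ (it is elementary abelian, $Z(\frat(G))$ being abelian) and with the conjugation action of $\overline{G}=G/\frat(G)$ on it, which makes sense because $A\le Z(\frat(G))$ forces $\frat(G)$ to centralize $A$. For $x\ne 1$ let $\sigma$ denote the $\mathbb{F}_p$-linear automorphism of $A$ induced by $x$; since $\overline{G}$ is elementary abelian, $\sigma^{p}=\mathrm{id}$. In additive notation on $A$ one has the standard dictionary $[a,_{j}x]=(\sigma-\mathrm{id})^{j}a$ and $\tau_x(a)=(\mathrm{id}+\sigma+\cdots+\sigma^{o(x)-1})a$. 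Grouping the powers of $\sigma$ into blocks of $p$ consecutive ones and using $\sigma^{p}=\mathrm{id}$ gives $\tau_x=p^{k-1}(\mathrm{id}+\sigma+\cdots+\sigma^{p-1})$ where $o(x)=p^{k}$; hence $\tau_x=0$ on $A$ whenever $o(x)>p$, in which case $\ker\tau_x=A$ and (1), (3), (4) are immediate. So the real content is the case $o(x)=p$, to which I restrict.

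For part (2) the key is the polynomial identity $1+t+\cdots+t^{p-1}=(t-1)^{p-1}$ in $\mathbb{F}_p[t]$, an immediate consequence of the Frobenius relation $(t-1)^{p}=t^{p}-1$ after cancelling the factor $t-1$. Evaluating at $\sigma$ (permissible because $\sigma^{p}=\mathrm{id}$) gives $\tau_x=(\sigma-\mathrm{id})^{p-1}$ as an operator on $A$, that is, $\tau_x(a)=[a,_{p-1}x]$ for every $a\in A$. Part (1) then drops out: $\tau_x^{2}=(\sigma-\mathrm{id})^{2(p-1)}=0$ since $2(p-1)\ge p$, so $\mathrm{Im}\,\tau_x\subseteq\ker\tau_x$, and rank--nullity over $\mathbb{F}_p$ yields $\di A=\di{\ker\tau_x}+\di{\mathrm{Im}\,\tau_x}\le 2\,\di{\ker\tau_x}$, i.e.\ $\di{\ker\tau_x}\ge\frac12\di A$.

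For parts (3) and (4) the additional input is that the chain $A_i=A\cap Z_i(G)$ is lowered by commutation with $x$: $[A_i,x]\subseteq A_{i-1}$, since $[A_i,x]\subseteq A$ (as $A$ is $\overline{G}$-invariant) and $[A_i,x]\subseteq[Z_i(G),G]\subseteq Z_{i-1}(G)$. Iterating $p-1$ times and invoking part (2), $\tau_x(A_i)=[A_i,_{p-1}x]\subseteq A_{i-p+1}$, whence
\[
\di{\ker\tau_x\cap A_i}=\di{A_i}-\dim\tau_x(A_i)\ge\di{A_i}-\di{A_{i-p+1}},
\]
which is (4); assertion (3) is the special case $i\le p-1$, where $A_{i-p+1}=1$ and so $A_i\le\ker\tau_x$. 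The one place I expect to need care is the bookkeeping on the order of $x$ in (2) — the definition of $\tau_x$ involves $o(x)$, whereas the action of $x$ on $A$ only depends on $\overline{x}$, which has order dividing $p$; beyond that, everything is linear algebra over $\mathbb{F}_p$ together with the commutator/central-series estimates, so no serious obstacle is anticipated.
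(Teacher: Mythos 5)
Your proof is correct and follows essentially the same route as the paper's: the identity $\tau_x=(\sigma-\mathrm{id})^{p-1}$ (which you get from $1+t+\cdots+t^{p-1}=(t-1)^{p-1}$ in $\mathbb{F}_p[t]$ and which the paper gets by collecting commutators and summing binomial coefficients), then $\tau_x^2=0$ plus rank--nullity for (1), and $[A_i,x]\subseteq A_{i-1}$ iterated $p-1$ times for (3) and (4). Your caveat about $o(x)$ versus $o(\bar x)$ is well taken---the paper's own computation of (2) implicitly takes the trace over $p$ terms, i.e.\ uses the order of $\bar x$---and your linear-algebra phrasing also tightens the paper's slightly imprecise step $\mathrm{Im}(\tau_x)\cap A_i\subseteq A_{i-p+1}$ into the correct $\tau_x(A_i)\subseteq A_{i-p+1}$.
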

\begin{proof}
(1) $\tau_x$ is a linear transformation and $\tau_x(a^x)=\tau_x(a)$, so
$\tau^2_x=0$. This proves (1).

(2) By induction on $m$ we have
$a^{x^m}=\Pi_{i=0}^m[a,_ix]^{\binom{m}{i}}$ where $[a,_0x]=a$. Now
\begin{align*}
\tau_x(a)
&=\Pi_{m=0}^{p-1}a^{x^m}\\
&=\Pi_{m=0}^{p-1}\Pi_{i=0}^m[a,_ix]^{\binom{m}{i}}\\
&=\Pi_{m=0}^{p-1}[a,_mx]^{\sum_{i=m}^{p-1}\binom{i}{m}}\\
&=\Pi_{m=0}^{p-1}[a,_mx]^{\binom{p}{m+1}}\\
&=\Pi_{m=0}^{p-1}[a^{\binom{p}{m+1}},_mx]\\
&=[a,_{p-1}x].
\end{align*}

(3) It  follows from (2).

(4) Note that $\tau_x$ maps $A_i$ into $A_i$. Thus
$${\rm d}(\ker(\tau_x)\cap A_i)={\rm d}(A_i)-{\rm d}(\text{Im}(\tau_x)\cap
A_i).$$
Now it follows from (2) that $\text{Im}(\tau_x)\cap A_i\subseteq A_{i-p+1}$.
This proves (4).
\end{proof}
Let $a\in Z(\frat(G))$. Then  the corresponding inner derivation induced by $a$
will be denoted by
$\varphi_a:\overline{G}\to Z(\frat)$
 which sends $\bar{x}$ to $[x,a]$.
\begin{lem}\label{theme}
If $\delta\in\der{\overline G}{A}$ then the map $\hat\delta:G\to G$ defined by
$x^{\hat\delta}=x{\bar x}^\delta$ for all $x\in G$, is an automorphism of order
$p$
 leaving $\frat(G)$  elementwise fixed. The automorphism $\hat\delta$ is  inner
if
and only if $\delta\in \inder{\overline G}{A^\star}$.
\end{lem}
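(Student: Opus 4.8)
I would prove the two assertions of Lemma~\ref{theme} separately: first that $\hat\delta$ is a well-defined automorphism of $G$ of order dividing $p$ fixing $\frat(G)$ pointwise, and then the inner-ness criterion.

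For the first part, the key observation is that every value $\bar x^\delta$ lies in $A=\om{Z(\frat(G))}\subseteq Z(\frat(G))=C_G(\frat(G))$, so $\hat\delta$ is the identity on $\frat(G)$ and, more importantly, the correction factors commute with $\frat(G)$. To see $\hat\delta$ is a homomorphism, take $x,y\in G$ and compute $(xy)^{\hat\delta}=xy\,\overline{xy}^\delta$; since $\delta$ is a derivation of $\overline G$ into the $\overline G$-module $A$ (written multiplicatively), $\overline{xy}^\delta=(\bar x^\delta)^{\bar y}\,\bar y^\delta=y^{-1}(\bar x^\delta)y\,\bar y^\delta$, and substituting gives $(xy)^{\hat\delta}=x\,\bar x^\delta\,y\,\bar y^\delta=x^{\hat\delta}y^{\hat\delta}$. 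The map $\bar x\mapsto\bar x$ is the identity on $\overline G$, so $\hat\delta$ is surjective modulo $\frat(G)$, hence surjective on $G$ by the Burnside basis theorem, hence bijective; thus $\hat\delta\in\mathrm{Aut}(G)$. For the order, one iterates: $x^{\hat\delta^k}=x\,(\bar x^\delta)(\bar x^\delta)^{\hat\delta}\cdots$, but since each $\bar x^\delta\in A\subseteq Z(\frat(G))$ and the $\delta$-values depend only on cosets mod $\frat(G)$ which $\hat\delta$ fixes, one gets $x^{\hat\delta^k}=x\,(\bar x^\delta)^k$; as $A$ is elementary abelian, $\hat\delta^p=\mathrm{id}$.

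For the second part: if $a\in A^\star=\oms G\cap Z(\frat(G))$, then the inner automorphism $c_a:x\mapsto a^{-1}xa=x[x,a]$ has $x^{c_a}=x\,\bar x^{\varphi_a}$ with $\varphi_a\in\inder{\overline G}{A^\star}$, and since $a^p\in Z(G)$ one checks $c_a$ has order dividing $p$ and indeed equals $\hat\delta$ for $\delta=\varphi_a$; so $\delta\in\inder{\overline G}{A^\star}$ forces $\hat\delta$ inner. Conversely, suppose $\hat\delta=c_a$ for some $a\in G$. Since $\hat\delta$ fixes $\frat(G)$ pointwise, $a$ centralizes $\frat(G)$, i.e. $a\in C_G(\frat(G))=Z(\frat(G))$. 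Since $\hat\delta^p=\mathrm{id}$, $c_a^p=c_{a^p}=\mathrm{id}$, so $a^p\in Z(G)$, giving $a\in\oms G\cap Z(\frat(G))=A^\star$. Finally $x^{\hat\delta}=x\,\bar x^\delta$ and $x^{c_a}=x[x,a]=x\,\bar x^{\varphi_a}$ agree for all $x$, so $\delta=\varphi_a\in\inder{\overline G}{A^\star}$.

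**Main obstacle.** The routine parts are the derivation identity and Burnside basis argument; the point requiring care is the order computation $x^{\hat\delta^p}=x(\bar x^\delta)^p$ — one must verify that $(\bar{\bar x^\delta\cdot x})^\delta$-type terms behave correctly, i.e. that applying $\hat\delta$ again does not disturb the already-collected factor because it lies in $Z(\frat(G))$ and because $\delta$ is constant on $\frat(G)$-cosets. The other delicate point is the forward direction of the inner-ness criterion: pinning down that the witnessing element $a$ can be taken in $Z(\frat(G))$ (using $C_G(Z(\frat(G)))=\frat(G)$, equivalently $C_G(\frat(G))=Z(\frat(G))$, from the standing hypothesis of this section) and then in $\oms G$.
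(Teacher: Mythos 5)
Your proposal is correct, and since the paper dismisses this lemma with ``It is straightforward,'' your argument is exactly the intended verification: the derivation identity gives the homomorphism property, the Frattini/Burnside argument gives bijectivity, the fact that $\bar x^\delta\in A\le\Phi(G)$ is fixed by $\hat\delta$ gives $x^{\hat\delta^k}=x(\bar x^\delta)^k$ and hence $\hat\delta^p=\mathrm{id}$, and the standing hypothesis $C_G(\Phi(G))=Z(\Phi(G))$ together with $c_{a}^p=c_{a^p}$ pins the conjugating element inside $A^\star$. The only cosmetic point is that $\hat\delta$ has order exactly $p$ only when $\delta\neq 0$, which your ``order dividing $p$'' phrasing already handles correctly.
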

\begin{proof}
It is straightforward.
\end{proof}
\begin{lem}\label{inner}
\begin{enumerate}
\item $\inder{\overline G}{A^\star}\bigcap \der{\overline
G}{A_{i-1}} =\inder{\overline G}{A^\star\cap Z_i(G)}$.
\item
$\inder{\overline G}{A^\star\cap Z_i(G)}\cong \frac{A^\star\cap
Z_i(G)}{\cent}$, \item $\di{\inder{\overline G}{A^\star\cap
Z_i(G)}}\leq {\rm d}(A_i)$.\\
 In particular,  if $\cent$ is a direct factor of
$A^\star$ then ${\rm d}(\inder{\overline G}{A^\star\cap
Z_i(G)})={\rm d}(A_i)-{\rm d}(A_1)$.
\item If $\di{\der{\overline
G}{A_{i-1}}}>{\rm d}(\inder{\overline G}{A^\star\cap Z_i(G)})$
then $G$ has a non-inner automorphism of order $p$ leaving $\Phi(G)$
elementwise fixed.
\item If $G$
has no non-inner automorphism of order $p$ leaving $\Phi(G)$ elementwise fixed,
then
$$\di{\der{\overline G}{A_{i-1}}}\leq \di{A_i}.$$
\end{enumerate}
\end{lem}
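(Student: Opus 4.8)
The plan is to establish the five parts in order; parts (4) and (5) fall out quickly once (1)--(3) are in hand.

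For (1), I would use that for $a\in A^\star$ the associated inner derivation $\varphi_a\colon\overline G\to A$, $\bar x\mapsto[x,a]$, has image the subgroup $[G,a]$ of $A$ (here one uses $[G,a]\subseteq A$ for $a\in A^\star$). Hence $\varphi_a$ lies in $\der{\overline G}{A_{i-1}}$ exactly when $[G,a]\subseteq A\cap Z_{i-1}(G)$, and as $[G,a]\subseteq A$ anyway this amounts to $[G,a]\subseteq Z_{i-1}(G)$, i.e.\ --- by the defining property of the upper central series --- to $a\in Z_i(G)$. Noting also that $[G,A^\star\cap Z_i(G)]\subseteq A^\star\cap Z_{i-1}(G)$, so that $A^\star\cap Z_i(G)$ is a $\overline G$-submodule, this gives
$$\inder{\overline G}{A^\star}\cap\der{\overline G}{A_{i-1}}=\se{\varphi_a\mid a\in A^\star\cap Z_i(G)}=\inder{\overline G}{A^\star\cap Z_i(G)}.$$

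For (2), I would check that $a\mapsto\varphi_a$ is a homomorphism from $A^\star\cap Z_i(G)$ onto $\inder{\overline G}{A^\star\cap Z_i(G)}$ --- it respects products because $[x,ab]=[x,b]\,[x,a]^b$ and $b$ centralizes $[x,a]$, both being in the abelian group $Z(\frat(G))$ --- whose kernel is the set of $a$ with $[G,a]=1$, namely $(A^\star\cap Z_i(G))\cap\cent=\cent$, using $\cent\leq A^\star\cap Z_i(G)$ (valid since $\cent\leq Z(\frat(G))$, $\cent\leq Z_i(G)$, and $z^p\in\cent$ for $z\in\cent$ gives $\cent\leq\oms G$). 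This yields $\inder{\overline G}{A^\star\cap Z_i(G)}\cong\frac{A^\star\cap Z_i(G)}{\cent}$. For the inequality in (3), since a quotient has rank at most that of the group and the setup already records $\di{A^\star\cap Z_i(G)}=\di{A_i}$, one gets $\di{\inder{\overline G}{A^\star\cap Z_i(G)}}\leq\di{A_i}$. For the refinement, writing $A^\star=\cent\times K$, Dedekind's modular law gives $A^\star\cap Z_i(G)=\cent\times(K\cap Z_i(G))$, so the quotient is $K\cap Z_i(G)$, and additivity of rank on direct products of finite abelian $p$-groups, together with $\di{\cent}=\di{\om{\cent}}=\di{A_1}$, yields $\di{K\cap Z_i(G)}=\di{A_i}-\di{A_1}$.

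Finally, for (4): by (1) the subgroup $\inder{\overline G}{A^\star}\cap\der{\overline G}{A_{i-1}}$ of $\der{\overline G}{A_{i-1}}$ equals $\inder{\overline G}{A^\star\cap Z_i(G)}$, whose rank is by hypothesis strictly smaller than $\di{\der{\overline G}{A_{i-1}}}$, so it is a proper subgroup; picking $\delta\in\der{\overline G}{A_{i-1}}\subseteq\der{\overline G}{A}$ outside it, we have $\delta\notin\inder{\overline G}{A^\star}$, and Lemma \ref{theme} then produces a non-inner automorphism $\hat\delta$ of $G$ of order $p$ leaving $\frat(G)$ elementwise fixed. Part (5) is the contrapositive of (4) combined with the inequality of (3): if $G$ has no such automorphism then $\di{\der{\overline G}{A_{i-1}}}\leq\di{\inder{\overline G}{A^\star\cap Z_i(G)}}\leq\di{A_i}$. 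The genuinely load-bearing point is the rank bookkeeping in (2)--(3) --- identifying the kernel of $a\mapsto\varphi_a$ with $\cent$ and handling the modular-law splitting in the direct-factor case --- together with the small preliminary $[G,a]\subseteq A$ for $a\in A^\star$; the translation of ``image inside $A_{i-1}$'' into ``$a\in Z_i(G)$'' and the passage to a non-inner automorphism via Lemma \ref{theme} are then purely formal.
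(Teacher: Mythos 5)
Your proof is correct and takes essentially the same route as the paper, whose own argument consists of declaring (1) straightforward, exhibiting the epimorphism $a\mapsto\varphi_a$ with kernel $\cent$ for (2), and deducing (3)--(5) formally from (2) and Lemma \ref{theme}. You have simply supplied the routine verifications the paper omits (the homomorphism property of $a\mapsto\varphi_a$, the translation of ``values in $A_{i-1}$'' into ``$a\in Z_i(G)$'', and the Dedekind/rank bookkeeping in the direct-factor case), so there is nothing substantive to compare.
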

\begin{proof}
(1) It  is straightforward.

(2) It is enough to check  that the map
$$\varphi:A^\star\cap Z_i(G)\to
\inder{\overline G}{A^\star\cap Z_i(G)}$$
defined by $a^\varphi=\varphi_a$ for all $a\in A^\star\cap Z_i(G)$ is a group
epimorphism with the kernel $Z(G)$.

(3) It follows from (2).

(4) It follows from (2) and Lemma \ref{theme}.

(5) It follows from (3) and (4).
\end{proof}
\begin{thm}\label{general}
Suppose that   $\om{Z(\frat(G))}\leq Z_3(G)$.
If  $G$ satisfies one of the following conditions then $G$  has a non-inner
automorphism of order $p$ leaving $\Phi(G)$ elementwise fixed.
\begin{enumerate}
\item
$p>3$.
\item
$p=3$ and  ${\rm d}(G)> 3$.
\item
$p=3$, ${\rm d}(G)=3$, and
${\rm d}({Z(\frat(G))}\cap Z_2(G))\neq 3{\rm d}({Z(G)})$ or
${\rm d}({Z(\frat(G))})\neq 6{\rm d}(Z(G))$.
\item
$p=3$, ${\rm d}(G)=3$, and there exists $x\in G\backslash\frat$  such that
$\tau_x=0$.
\item
${\rm d}(G)=3$ and  $Z(G)$ is a direct factor of $\oms{G}\cap Z(\frat(G))$.
\end{enumerate}
\end{thm}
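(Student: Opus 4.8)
The plan is to argue by contradiction. Assume $G$ has no non-inner automorphism of order $p$ leaving $\frat(G)$ elementwise fixed; I will derive a contradiction in each of the five cases (recall $p$ is odd in cases (1)--(4), and I treat (5) for odd $p$). By Lemma \ref{inner}(4) the assumption gives, for every $i\in\mathbb{N}$,
\begin{align*}
\di{\der{\overline G}{A_{i-1}}}\le\di{\inder{\overline G}{A^\star\cap Z_i(G)}},
\end{align*}
whose right-hand side is at most $\di{A_i}$ by Lemma \ref{inner}(3), and equals $\di{A_i}-\di{\cent}$ when $\cent$ is a direct factor of $A^\star$. I will contradict this by manufacturing, via Lemma \ref{dimsder}, more derivations of $\overline G$ into $A_{i-1}$ than these bounds allow, for $i=2,3,4$. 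Since $A=\om{Z(\frat(G))}$ is characteristic in $G$, one has $[A_{i-1},\overline G]\subseteq A\cap Z_{i-2}(G)=A_{i-2}$, so Lemma \ref{dimsder} applies with $C=A_{i-1}$ and $D=A_{i-2}$ and, with $n=\di G$, gives
\begin{align*}
\di{\der{\overline G}{A_{i-1}}}=\di{\sder{A_{i-1}}{\overline G}{A}}\ge\sum_{j=1}^{n}\di{\ker(\tau_{\bar{x}_j})\cap A_{i-1}}-\binom{n}{2}\di{A_{i-2}}.
\end{align*}

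To evaluate these sums I would use, besides $A\le Z_3(G)$ (which makes $A=A_3$ and $A_i=A$ for all $i\ge 3$), the following from Lemma \ref{prop}: $A_1\le\ker\tau_{\bar{x}_j}$ always, as $A_1\le\cent$ and $A$ is elementary abelian; $A_2\le\ker\tau_{\bar{x}_j}$ since $p>2$, by Lemma \ref{prop}(3); and, because $\tau_{\bar{x}_j}(a)=[a,_{p-1}\bar{x}_j]$ for $a\in A$ by Lemma \ref{prop}(2) while $A\le Z_3(G)$, that $\ker\tau_{\bar{x}_j}\cap A=A$ when $p>3$ and $\di{\ker\tau_{\bar{x}_j}\cap A}\ge\di A-\di{A_1}$ when $p=3$. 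Put $\ell:=\di A$, $\ell_2:=\di{A_2}$, $\ell_1:=\di{\cent}=\di{A_1}$, so that $1\le\ell_1\le\ell_2\le\ell$ (note $A_1=\om{\cent}\ne 1$ and $A_1\le A_2\le A_3=A$, the last using $\frat(G)\ne 1$, hence $A\ne 1$), and $n=\di G\ge 2$. The instances $i=2,3,4$ then combine into
\begin{align*}
n\ell_1\le\ell_2,\qquad n\ell_2-\binom{n}{2}\ell_1\le\ell,\qquad n\ell-c\ell_1-\binom{n}{2}\ell_2\le\ell,
\end{align*}
where $c=0$ if $p>3$ and $c=n$ if $p=3$, and in case (5) each right-hand side is further lowered by $\ell_1$.

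For $p>3$ (case (1)), $c=0$; I expect the first two inequalities to give $(n+1)\ell_2\le 2\ell$ and the third $2\ell\le n\ell_2$, so $(n+1)\ell_2\le n\ell_2$, forcing $\ell_2=0$ and then $\ell=0$, contradicting $\ell\ge 1$. For $p=3$ the same elimination should produce $(n-1)(n+1)\le n^{2}-n+2$, i.e.\ $n\le 3$, a contradiction as soon as $\di G\ge 4$ (case (2)). In the leftover regime $p=3$, $\di G=3$ the three inequalities are no longer incompatible; chaining them (from the last two, $\ell\le 6\ell_1$; feeding this into the second, $\ell_2\le 3\ell_1$, so equality holds in the first; then $\ell=6\ell_1$) pins down $\ell_2=3\ell_1$ and $\ell=6\ell_1$. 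Hence case (3) follows at once: its hypothesis is precisely $\ell_2\ne 3\ell_1$ or $\ell\ne 6\ell_1$ (using $\di{Z(\frat(G))\cap Z_2(G)}=\di{A_2}=\ell_2$ and $\di{Z(\frat(G))}=\di A=\ell$).

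For case (4) I would take the generating set with $\bar{x}_1=\bar{x}$, where $x\in G\setminus\frat(G)$ has $\tau_{\bar{x}}=0$; then $\ker\tau_{\bar{x}_1}\cap A=A$ improves the sum in the $i=4$ estimate to $\di A+2(\di A-\di{A_1})=3\ell-2\ell_1$, so the third inequality sharpens to $2\ell\le 2\ell_1+3\ell_2$, and combined with the first two this yields $6\ell_1\le\ell\le 5\ell_1$, impossible since $\ell_1\ge 1$. For case (5) (for odd $p$; by case (1) the only subcase left is $p=3$, $\di G=3$), lowering the three right-hand sides by $\ell_1$ and chaining exactly as above forces $\ell_1\le 0$, impossible since $\ell_1\ge 1$. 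The step I expect to be the main obstacle is exactly this last regime $p=3$, $\di G=3$: there the estimates are tight, so one must first extract the precise picture $\di{A_2}=3\di{\cent}$, $\di A=6\di{\cent}$, and only then can one see that each of the extra hypotheses (3), (4), (5) contradicts it.
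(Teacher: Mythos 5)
Your proposal is correct and follows essentially the same route as the paper: you apply Lemma \ref{dimsder} with $(C,D)=(A_{i-1},A_{i-2})$ against the bound from Lemma \ref{inner}, obtaining exactly the paper's inequalities (F1)--(F3), and your eliminations in each of the five cases (including pinning down $\di{A_2}=3\di{Z(G)}$ and $\di{A}=6\di{Z(G)}$ in the tight regime $p=3$, $\di{G}=3$ before handling (3), (4), (5)) mirror the paper's (F$'$1)--(F$'$3) and subsequent case analysis. Your explicit restriction of case (5) to odd $p$ also matches what the paper's own argument actually covers.
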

\begin{proof}
 Suppose, on the contrary, that $G$ has no non-inner automorphisms of order $p$
leaving $\Phi(G)$ elementwise fixed. In each cases we will arrive to a
contradiction.
 Lemma
\ref{dimsder} (by replacing $C$ and $D$ by $A_i$ and
$A_{i-1}$ respectively), implies  the  following inequalities,
for $i=1,2,3$
\begin{align*}
\di{\der{\overline{G}}{A_i}}
\geq\sum_{j=1}^n\di{\ker(\tau_{x_j})\cap A_i}-\binom{n}{2}\di{A_{i-1}},
\end{align*}
Setting $d_i=\di{A_i}$,
 it follows from
  Lemmas \ref{inner} and \ref{prop} that
\begin{align}
&d_2\geq{\rm d}(\frac{A^\star\cap Z_2(G)}{\cent})\geq nd_1,\tag{F1}\\
&d_3\geq {\rm d}(\frac{A^\star\cap Z_3(G)}{\cent})\geq
nd_2-nd_{3-p}-\binom{n}{2}d_1,\tag{F2}\\
&d_3\geq{\rm d}(\frac{A^\star}{\cent})
\geq\sum_{j=1}^3\di{\ker(\tau_{x_
j})}-\binom{n}{2}d_2
 \geq nd_3-nd_{4-p}-\binom{n}{2}d_2,\tag{F3}
\end{align}
and therefore
\begin{align}
(n-1)d_{3}&\leq
 nd_{4-p}+\binom{n}{2}d_{2},\tag{F'1}\\
\binom{n}{2}d_{2}&\leq
nd_{4-p}+n(n-1)d_{3-p}+\binom{n}{2}(n-1)d_{1},\tag{F'2}\\
\binom{n}{2}d_{1}&\leq
nd_{4-p}+n(n-1)d_{3-p}\tag{F'3}
\end{align}

(1) If $p>3$ then it follows from (F'3) that
$\binom{n}{2}d_1\leq 0$, which is a contradiction, since $d_1\geq 1$.

(2) It follows from (F'3) that
$\binom{n}{2}d_1\leq nd_1$. This proves (2).

(3) By hypothesis $n=3$. Since $\di{Z(\Phi(G))\cap Z_2(G)}=d_2$ and
$\di{Z(G)}=d_1$,  the above inequalities  imply that $6d_1=2d_2=d_3$.

(4) As $\di{G}=3$, by Burnside's basis theorem we may assume that $x_1=x$.
Since $\tau_{x_1}=0$, it follows from
inequality  (F3) that $d_3\geq 3d_3-2d_1-3d_2$ which is in contrary with (3).

(5) By
inequality (F2) and part (3) we have
$$
5d_1=d_3-d_1\geq{\rm d}(\frac{A^\star}{\cent}) \geq
3d_2-3d_1=6d_1,
$$
which is a contradiction.
\end{proof}

\begin{thm}\label{m1}
Suppose that $G$ is a finite $p$-group. If one the following
conditions hold then $G$ admits a non-inner automorphism of order
$p$ leaving $\Phi(G)$ elementwise fixed.
\begin{enumerate}
\item  $p$ is an odd prime and ${\rm
rk}(\frac{G}{Z(G)})<\binom{\di{G}+1}{2}\di{Z(G)}$, \item  $p=2$
and ${\rm rk}(\frac{G}{Z(G)})<\binom{\di{G}}{2}\di{Z(G)}$.
\end{enumerate}
\end{thm}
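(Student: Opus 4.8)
The strategy is: reduce to the situation of Section~3, then pit the lower bound for $\di{\der{\overline G}{A_2}}$ coming from Lemma~\ref{dimsder} against an upper bound for $\di{\inder{\overline G}{A^\star\cap Z_i(G)}}$ read off directly from the hypothesis on $\mathrm{rk}(G/\cent)$, and conclude with Lemma~\ref{inner}(4).

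\emph{Reduction and preliminary bounds.} If $C_G(Z(\frat(G)))\neq\frat(G)$, then $G$ already has a non-inner automorphism of order $p$ leaving $\frat(G)$ elementwise fixed by \cite{DS}; so we may assume $C_G(Z(\frat(G)))=\frat(G)$ and work inside Section~3, keeping its notation and writing $d_i:=\di{A_i}$, so that $d_1=\di{\om\cent}=\di\cent$ and $n=\di G\geq 2$ (as $G$ is non-abelian). Since $\cent\leq C_G(\frat(G))=Z(\frat(G))$ and trivially $\cent\leq\oms G$, we have $\cent\leq A^\star$, hence $\cent\leq A^\star\cap Z_i(G)$ for every $i\geq1$; with Lemma~\ref{inner}(2) this shows $\inder{\overline G}{A^\star\cap Z_i(G)}\cong (A^\star\cap Z_i(G))/\cent$ is a subgroup of $G/\cent$, so that $\di{\inder{\overline G}{A^\star\cap Z_i(G)}}\leq {\rm rk}(G/\cent)$ for all $i\geq1$. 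On the other hand $A_1\leq\cent$ is a trivial $\overline G$-module, so $\der{\overline G}{A_1}=\ho{\overline G}{A_1}$, and as $\overline G$ and $A_1$ are elementary abelian of ranks $n$ and $d_1$ this gives $\di{\der{\overline G}{A_1}}=nd_1$.

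\emph{The two cases.} If $d_2<nd_1$, then Lemma~\ref{inner}(3) gives $\di{\inder{\overline G}{A^\star\cap Z_2(G)}}\leq d_2<nd_1=\di{\der{\overline G}{A_1}}$, and Lemma~\ref{inner}(4) with $i=2$ yields the desired automorphism. If instead $d_2\geq nd_1$, apply Lemma~\ref{dimsder} with $C=A_2$ and $D=A_1$ — legitimate since $[A_2,x_j]\subseteq[A\cap Z_2(G),G]\subseteq A\cap\cent=A_1$ for each $j$, and since $A_2$ is a $\overline G$-submodule of $A$, whence $\sder{A_2}{\overline G}{A}=\der{\overline G}{A_2}$. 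This gives
\[
\di{\der{\overline G}{A_2}}\ \geq\ \sum_{j=1}^{n}\di{\ker(\tau_{x_j})\cap A_2}-\binom n2 d_1 .
\]
If $p$ is odd, Lemma~\ref{prop}(3) gives $A_2\leq\ker(\tau_{x_j})$ for all $j$, so the sum is $nd_2\geq n^2 d_1$ and $\di{\der{\overline G}{A_2}}\geq n^2d_1-\binom n2 d_1=\binom{n+1}{2}d_1$. If $p=2$, Lemma~\ref{prop}(4) gives $\di{\ker(\tau_{x_j})\cap A_2}\geq d_2-d_1$, so the sum is at least $n(d_2-d_1)\geq n(n-1)d_1$ and $\di{\der{\overline G}{A_2}}\geq n(n-1)d_1-\binom n2 d_1=\binom n2 d_1$. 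In either case the hypothesis forces $\di{\der{\overline G}{A_2}}>{\rm rk}(G/\cent)\geq\di{\inder{\overline G}{A^\star\cap Z_3(G)}}$, and Lemma~\ref{inner}(4) with $i=3$ completes the argument.

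\emph{Main obstacle.} Nothing here is computationally hard; the two points that make the pieces fit are (i) recognizing that $\inder{\overline G}{A^\star\cap Z_i(G)}$ embeds into $G/\cent$, which is the only place the hypothesis is used, and (ii) the dichotomy on $d_2$ versus $nd_1$, which is precisely what permits the choice $i=2$ when $A_2$ is too small for Lemma~\ref{dimsder} to have content, and $i=3$ otherwise. Once those are in place, the bookkeeping with the $\binom n2$ and $\binom{n+1}{2}$ coefficients is routine.
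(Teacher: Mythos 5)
Your proposal is correct and is essentially the paper's own argument: the paper assumes no such automorphism exists, invokes inequalities (F1) and (F2) from the proof of Theorem \ref{general} (which rest on exactly your Lemma \ref{dimsder}/\ref{prop}/\ref{inner} computation with $C=A_2$, $D=A_1$), and bounds ${\rm rk}(G/Z(G))\geq {\rm d}\bigl((A^\star\cap Z_3(G))/Z(G)\bigr)\geq \binom{n+1}{2}d_1$ (resp.\ $\binom{n}{2}d_1$ for $p=2$) to reach a contradiction. Your explicit dichotomy on $d_2$ versus $nd_1$ is just the contrapositive of (F1), so the two proofs coincide in substance.
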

\begin{proof}
 Suppose, for a contradiction, that $G$ has no non-inner automorphisms of order $p$
leaving $\Phi(G)$ elementwise fixed. We use the notations and
inequalities (F1) and (F2) in the  proof of Theorem \ref{general}.
If $p$ is odd then
\begin{align}
& {\rm rk}(\frac{G}{Z(G)})\geq {\rm d}(\frac{A^\star\cap Z_3(G)}{\cent})\geq
nd_2-\binom{n}{2}d_1\geq
n^2d_1-\binom{n}{2}d_1=\binom{n+1}{2}d_1, \notag
\end{align}
and if $p=2$ then
\begin{align}
& {\rm rk}(\frac{G}{Z(G)})\geq {\rm d}(\frac{A^\star\cap Z_3(G)}{\cent})\geq
nd_2-nd_1-\binom{n}{2}d_1\geq
n^2d_1-nd_1-\binom{n}{2}d_1=\binom{n}{2}d_1. \notag
\end{align}
these imply the result.
\end{proof}

\section{\bf $p$-Groups of Class $3$}
Recall that the rank of a finite group $G$ is the minimum number $r$ such that
every subgroup of $G$ can be generated by at most $r$ elements. The number $r$
is denoted by ${\rm rk}(G)$.
\begin{lem}\label{rg}
If G is a nilpotent group of class $3$ then
${\rm rk}(\frac{G}{\cent})\leq\binom{{\rm d}(G)+1}{2}$.
\end{lem}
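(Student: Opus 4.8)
The plan is to bound the rank of $G/\cent$ by analysing the lower central series of $G$, which has only three nontrivial terms since $\mathrm{cl}(G)=3$. Write $d={\rm d}(G)$. The quotient $G/\cent$ is itself nilpotent of class at most $2$ (since $\gamma_3(G)\leq\cent$ when... actually $\gamma_3(G)$ need not lie in $\cent$, so instead one works with the chain $\cent \leq \gamma_3(G)\cent / \cent$ sitting inside $G/\cent$). The key structural fact is that any subgroup $H\leq G$ satisfies ${\rm d}(H)\leq {\rm d}(H\gamma_2(G)/\gamma_2(G)) + {\rm d}(H\cap\gamma_2(G))$, and one iterates this along the central series. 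So the main steps are: first, reduce to estimating ${\rm d}(K)$ for an arbitrary subgroup $K$ of $G/\cent$; second, using that $G/\gamma_2(G)$ is $d$-generated, observe the image of $K$ in $G/\gamma_2(G)$ needs at most $d$ generators; third, control the part of $K$ lying inside $\gamma_2(G)\cent/\cent$ by exploiting that $\gamma_2(G)/\gamma_3(G)$ is a quotient of $G/\gamma_2(G) \wedge G/\gamma_2(G)$, hence generated by at most $\binom{d}{2}$ elements, while $\gamma_3(G)\cent/\cent$ is trivial or small.

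In more detail: let $K/\cent \leq G/\cent$ be an arbitrary subgroup, with $K\geq\cent$. Applying the generator-counting inequality to the normal series $\cent \leq \gamma_3(G)\cent \leq \gamma_2(G)\cent \leq K$, we get
\begin{align*}
{\rm d}(K/\cent) \leq {\rm d}\!\left(\frac{K/\cent}{\gamma_2(G)\cent/\cent}\right) + {\rm d}\!\left(\frac{\gamma_2(G)\cent/\cent}{\gamma_3(G)\cent/\cent}\right) + {\rm d}\!\left(\gamma_3(G)\cent/\cent\right).
\end{align*}
The first term is at most ${\rm d}(G/\gamma_2(G)) = d$. For the third term, $\gamma_3(G)$ is central, so $\gamma_3(G)\cent/\cent = 1$; hence that term vanishes. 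The middle term is bounded by ${\rm d}(\gamma_2(G)/\gamma_3(G))$, and since the commutator map induces a surjection from $\bigwedge^2(G/\gamma_2(G))$ onto $\gamma_2(G)/\gamma_3(G)$, we get ${\rm d}(\gamma_2(G)/\gamma_3(G))\leq \binom{d}{2}$. Adding, ${\rm d}(K/\cent)\leq d + \binom{d}{2} = \binom{d+1}{2}$, and since $K/\cent$ was an arbitrary subgroup, ${\rm rk}(G/\cent)\leq\binom{d+1}{2}$.

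The one place that needs care — the main obstacle — is the justification that $\gamma_3(G)\leq\cent$; this holds precisely because $\mathrm{cl}(G)=3$ forces $\gamma_4(G)=1$, so $[\gamma_3(G),G]=1$, i.e. $\gamma_3(G)\leq\cent$. With that in hand the third summand disappears cleanly. A secondary subtlety is that the elementary generator inequality ${\rm d}(K)\leq {\rm d}(K/N)+{\rm d}(N)$ for $N\trianglelefteq K$ is being used repeatedly; for finite $p$-groups this follows immediately from Burnside's basis theorem (all the quotients involved are again finite $p$-groups, or one passes to the $p$-groups $G/\cent$ etc., noting $\mathrm{rk}$ of a nilpotent group is the supremum of $\mathrm{rk}$ over its Sylow subgroups, reducing the whole statement to the $p$-group case). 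Once these reductions are made the computation is the short chain of inequalities above.
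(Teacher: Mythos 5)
Your argument is correct and is essentially the paper's proof: both bound ${\rm rk}(G/\cent)$ by splitting an arbitrary subgroup along the abelian normal subgroup $G'\cent/\cent$ (your $\gamma_2(G)\cent/\cent$), using $\gamma_3(G)\leq \cent$ to see that this piece is generated by the $\binom{{\rm d}(G)}{2}$ images of the $[x_i,x_j]$ while the image in $G/G'\cent$ needs at most ${\rm d}(G)$ generators. The only cosmetic point is that your displayed inequality should be written for a general subgroup $K$ as ${\rm d}(K/\cent)\leq {\rm d}\big(K\gamma_2(G)\cent/\gamma_2(G)\cent\big)+{\rm d}\big((K\cap\gamma_2(G)\cent)/\cent\big)$, i.e.\ in the intersection/image form you correctly state just before the display, since $\gamma_2(G)\cent$ need not be contained in $K$.
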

\begin{proof}
Note that if $H$ is a  group of class $2$ then
${\rm rk}(H)\leq{\rm d}(\frac{H}{H'})+{\rm d}(H')$.
Suppose that
$G=\langle {x}_1,\dots,{x}_{{\rm d}(G)}\rangle$.
Then
$G'Z(G)=\langle [x_i,x_j], Z(G)| 1\leq i<j\leq {\rm d}(G)\rangle$. Therefore
$${\rm rk}(\frac{G}{Z(G)})\leq{\rm d}(\frac{G}{G'Z(G)})+
{\rm d}(\frac{G'Z(G)}{Z(G)})\leq{\rm d}(G)+\binom{{\rm d}(G)}{2}=\binom{{\rm
d}(G)+1}{2}.
$$
\end{proof}
\begin{lem}\label{cyclic}
Let $G$ be a finite non-abelian $p$-group of class $3$ where $p>2$.
 If $Z(G)$ is not cyclic then $G$ has a
non-inner automorphism of order $p$ leaving $\Phi(G)$ elementwise fixed.
\end{lem}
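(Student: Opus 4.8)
The plan is to obtain the statement as an immediate consequence of Lemma~\ref{rg} and Theorem~\ref{m1}(1), avoiding any return to Theorem~\ref{general}. One could instead try to invoke Theorem~\ref{general}: since ${\rm cl}(G)=3$ we have $Z_3(G)=G$, hence $\Omega_1(Z(\Phi(G)))\le Z_3(G)$ and the theorem applies; for $p>3$ this already finishes via part~(1). But for $p=3$ that approach would force a case-by-case check of the extra hypotheses of parts (2)--(5) according to the value of ${\rm d}(G)$, and for small ${\rm d}(G)$ the generic inequalities there do not by themselves yield a contradiction. The route through Theorem~\ref{m1} sidesteps all of this.

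Here is how the short argument goes. Since $G$ has class $3$, Lemma~\ref{rg} gives ${\rm rk}(G/Z(G))\le\binom{{\rm d}(G)+1}{2}$. Because $G$ is non-abelian we have ${\rm d}(G)\ge 2$, so $\binom{{\rm d}(G)+1}{2}\ge 3$; and because $Z(G)$ is not cyclic we have ${\rm d}(Z(G))\ge 2$. Combining these,
\[
{\rm rk}\!\left(\frac{G}{Z(G)}\right)\le\binom{{\rm d}(G)+1}{2}<2\binom{{\rm d}(G)+1}{2}\le{\rm d}(Z(G))\binom{{\rm d}(G)+1}{2},
\]
so, as $p>2$, the hypothesis of Theorem~\ref{m1}(1) is satisfied. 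That theorem then produces a non-inner automorphism of $G$ of order $p$ leaving $\Phi(G)$ elementwise fixed, which is exactly the conclusion sought.

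I do not expect any genuine obstacle here: the entire content is already packaged in Lemma~\ref{rg} (the class-$3$ rank bound) and in Theorem~\ref{m1} (whose proof in turn reduces, via the known implication $C_G(Z(\Phi(G)))\ne\Phi(G)\Rightarrow$ existence, to the standing situation of Section~3 and inequalities (F1)--(F2)). The only point that needs a moment's care is that the middle inequality above must be \emph{strict}, and this is precisely where the two hypotheses of the lemma enter: non-abelianness of $G$ gives ${\rm d}(G)\ge 2$ and hence $\binom{{\rm d}(G)+1}{2}>0$, while non-cyclicity of $Z(G)$ gives the factor ${\rm d}(Z(G))\ge 2$.
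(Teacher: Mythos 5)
Your proposal is correct and is essentially the paper's own proof: the paper combines Lemma~\ref{rg} with Theorem~\ref{m1}(1) in contrapositive form (assuming no such automorphism exists forces $\binom{{\rm d}(G)+1}{2}{\rm d}(Z(G))\le{\rm rk}(G/Z(G))\le\binom{{\rm d}(G)+1}{2}$, hence ${\rm d}(Z(G))\le 1$), while you run the identical inequality chain in the direct direction. The only difference is presentational.
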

\begin{proof}
Suppose, for a contradiction, that $G$ does not satisfy the
conclusion of the lemma. Then by Theorem \ref{m1} and Lemma
\ref{rg} one has
$$\binom{\di{G}+1}{2}\di{Z(G)}\leq{\rm rk}(\frac{G}{\cent})\leq\binom{{\rm
d}(G)+1}{2}$$ that implies the result.
\end{proof}
We use the following lemma in the sequel without any further reference.
\begin{lem}\label{equ}
Let  $G$ be a $3$-group of class $3$,  $x,y,z\in G$ and $i\in\mathbb{N}$. Then
\begin{enumerate}
\item
$[x,y^i]=[x,y]^i[x,y,y]^{\binom{i}{2}}$,
\item
$[x^i,y]=[x,y]^i[x,y,x]^{\binom{i}{2}}$,
\item
$[x^3,y]=1$ if and only if $[x,y]^3=1$,
\item
$(xy)^3=x^3y^3[y,x]^3[y,x,x][y,x,y]^2$,
\item
$[x,y,z][y,z,x][z,x,y]=1$,
\item
$Z(\frat(G))\leq\oms G$.
\end{enumerate}
\end{lem}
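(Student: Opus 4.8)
The plan is to verify the six identities one at a time, exploiting that $\mathrm{cl}(G)=3$, so that $\gamma_4(G)=1$ and all triple commutators are central. Parts (1) and (2) are the standard commutator-collection formulas specialized to class $3$: in a group of class $3$ one has $[x,y^i]=[x,y]^i[x,y,y]^{\binom i2}$ because the only correction terms that survive collection live in $\gamma_3(G)\le Z(G)$ and the coefficient of $[x,y,y]$ accumulates as $1+2+\cdots+(i-1)=\binom i2$; part (2) is the mirror image obtained by the substitution $x\leftrightarrow y$ together with $[a,b]^{-1}=[b,a]$, or equivalently by applying (1) in $G^{\mathrm{op}}$. I would prove (1) by induction on $i$, using $[x,y^{i+1}]=[x,y^i]^{\,y}[x,y]=[x,y^i][x,y^i,y][x,y]$ and then that $[x,y^i,y]=[x,y,y]^i$ modulo $\gamma_4(G)=1$.

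Parts (3)--(5) then follow quickly. For (3): by (2) with $i=3$, $[x^3,y]=[x,y]^3[x,y,x]^{\binom 32}=[x,y]^3[x,y,x]^3$; since $[x,y,x]\in\gamma_3(G)$ and $\gamma_3(G)$ has exponent dividing $3$ (as $G$ is a $3$-group of class $3$, so $\gamma_3(G)$ is elementary abelian — or one can argue directly that $[x,y,x]^3=[x^3,y,x]\cdot(\text{central junk})=1$ using that $\gamma_3$ is central), the term $[x,y,x]^3=1$, giving $[x^3,y]=[x,y]^3$, and the equivalence is immediate. For (4): this is the class-$3$ version of the Hall--Petrescu formula for $(xy)^3$; I would expand $(xy)^3=xyxyxy$ and collect the $y$'s to the right, tracking the commutators $[y,x]$, $[y,x,x]$, $[y,x,y]$ with the combinatorial coefficients $3$, $1$, $2$ respectively, using that everything in $\gamma_3(G)$ is central so the order of collection in the top-degree terms is irrelevant. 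For (5): this is the Hall--Witt identity; in a group of class $3$ the Jacobi-type relation $[x,y,z][y,z,x][z,x,y]=1$ holds because the "higher" correction terms in the Hall--Witt identity lie in $\gamma_4(G)=1$ — I would just cite Hall--Witt and reduce modulo $\gamma_4$.

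Finally, (6) is the one genuinely group-theoretic assertion rather than a commutator calculation, and I expect it to be the main point. We must show $\Phi(G)=G'G^p\subseteq\oms G=\langle x\in G\mid x^p\in Z(G)\rangle$, i.e. that every element of $G'G^3$ has its cube in $Z(G)$. Since $G$ has class $3$, $G'\le Z_2(G)$, so for $w\in G'$ and any $g\in G$ we have $[w,g]\in\gamma_3(G)\le Z(G)$, and then $[w^3,g]=[w,g]^3[w,g,w]^{\binom32}=[w,g]^3$ by (2) (the second factor is a central commutator of something central, hence trivial); so it suffices that $[w,g]^3=1$ for $w\in G'$, which holds because $[w,g]\in\gamma_3(G)$ is of exponent dividing $3$. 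For $g^3$ with $g\in G$: $[g^3,h]=[g,h]^3[g,h,g]^{\binom32}=[g,h]^3[g,h,g]^3=1$ by (3) applied appropriately and the exponent-$3$ property of $\gamma_3(G)$. Since $\oms G$ is a subgroup (being the preimage of $\Omega_1$ of the center-related quotient, or directly verified in this class-$3$ setting) containing all generators $G'\cup G^3$ of $\Phi(G)$, we conclude $\Phi(G)\le\oms G$, i.e. $Z(\Phi(G))\le\Phi(G)\le\oms G$. The delicate part here is justifying that $\oms G$ is closed under products in a class-$3$ $3$-group — this uses (4): if $x^3,y^3\in Z(G)$ then $(xy)^3=x^3y^3[y,x]^3[y,x,x][y,x,y]^2$, and the commutator tail lies in $\gamma_3(G)$, so I must also know $\gamma_3(G)\le Z(G)$ (true, class $3$) and that $[y,x]^3\in Z(G)$ (true, since $[y,x]^3\in\gamma_3(G)$ when... actually $[y,x]^3$ need not be in $\gamma_3$ in general, so the cleaner route is to note $[y,x]^3=[y,x^3]\cdot(\text{central})$ by (2)--(3), hence central whenever — here I would simply invoke that $(xy)^3\equiv x^3y^3\pmod{\gamma_3(G)}$ and $\gamma_3(G)\le Z(G)$, so $(xy)^3\in Z(G)$ once $x^3,y^3\in Z(G)$). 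That reduction is where I would be most careful.
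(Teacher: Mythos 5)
Your treatment of (1), (2) and (5) is fine. The recurring problem is the claim, invoked in (3), (4) and (6), that $\gamma_3(G)$ has exponent dividing $3$ (``is elementary abelian''). This is false for $3$-groups of class $3$: in $G={\rm UT}_4(\mathbb{Z}/9\mathbb{Z})$, the upper unitriangular $4\times 4$ matrices over $\mathbb{Z}/9\mathbb{Z}$, one has $\gamma_3(G)=Z(G)\cong\mathbb{Z}/9\mathbb{Z}$. Consequently your intermediate identity $[x^3,y]=[x,y]^3$ is false in general and your proof of (3) does not go through as written (your fallback $[x,y,x]^3=[x^3,y,x]$ is correct but that element need not be trivial either). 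Part (3) is still true, but the argument must use that $w\mapsto[w,x]$ is a homomorphism from $G'$ into $\gamma_3(G)\le Z(G)$, so that $[x,y,x]^3=[[x,y]^3,x]$ and $[x^3,y]=[x,y]^3\,[[x,y]^3,x]$: if $[x,y]^3=1$ both factors vanish, and conversely $[x^3,y]=1$ forces $[x,y]^3=[[x,y]^3,x]^{-1}\in\gamma_3(G)\le Z(G)$, whence $[[x,y]^3,x]=1$ and then $[x,y]^3=1$. (The same example shows that honest collection in (4) produces $[y,x,y]^{5}$, not $[y,x,y]^{2}$; with $x=I+E_{23}$, $y=I+E_{12}+E_{23}+E_{34}$ the element $[y,x,y]$ has order $9$ and the two versions genuinely differ, so the coefficients $3,1,2$ you announce cannot come out of collection alone --- the identity is only valid modulo $\gamma_3(G)^3$, which suffices for every later use but is not what you assert.)

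The proof of (6) is the serious gap. You aim at the stronger statement $\frat(G)\le\oms{G}$, and that statement is false: in ${\rm UT}_4(\mathbb{Z}/9\mathbb{Z})$ the commutator $I+E_{13}\in G'$ has non-central cube, and indeed $G'\not\le\oms{G}$ there, while $Z(\frat(G))\le\oms{G}$ does hold. Your argument nowhere uses the defining property of $a\in Z(\frat(G))$ --- that $a$ centralizes $G^3$ --- and that is precisely the input the proof needs; the steps you do write ($[w,g]^3=1$ for $w\in G'$, and $[g^3,h]=1$ for all $g,h$) both rest on the false exponent claim and are themselves false. A correct argument: for $a\in Z(\frat(G))$ and $g\in G$ we have $1=[a,g^3]=[a,g]^3[a,g,g]^{3}$ by (1); since $[a,g]\in G'$ and $w\mapsto[w,g]$ is a homomorphism $G'\to\gamma_3(G)\le Z(G)$, we get $[a,g,g]^3=[[a,g]^3,g]$, hence $[a,g]^3=[[a,g]^3,g]^{-1}\in\gamma_3(G)\le Z(G)$, which forces $[[a,g]^3,g]=1$ and then $[a,g]^3=1$; finally $[a^3,g]=[a,g]^3[a,g,a]^{3}=[[a,g]^3,a]=1$ by (2), so $a^3\in Z(G)$ and $a\in\oms{G}$. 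This also disposes of your worry about $\oms{G}$ being closed under products: one shows directly that every element of $Z(\frat(G))$ has central cube.
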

\begin{thm}\label{c3}
For odd primes $p$, if
 $G$  is a finite non-abelian  $p$-group of class $3$
then $G$ has a non-inner automorphism of order $p$ leaving $\Phi(G)$ fixed
elementwise.
\end{thm}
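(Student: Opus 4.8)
The argument splits first on whether $C_G(Z(\Phi(G)))=\Phi(G)$, and then on $p$ and ${\rm d}(G)$. If $C_G(Z(\Phi(G)))\neq\Phi(G)$, the theorem of Deaconescu and Silberberg \cite{DS} already yields a non-inner automorphism of order $p$ leaving $\Phi(G)$ elementwise fixed, so from now on assume $C_G(Z(\Phi(G)))=\Phi(G)$ and adopt the notation of Section~3: $n={\rm d}(G)$, $A=\om{Z(\Phi(G))}$, $A_i=A\cap Z_i(G)$, $d_i={\rm d}(A_i)$. Since ${\rm cl}(G)=3$ we have $Z_3(G)=G$, hence $A\leq Z_3(G)$ and Theorem~\ref{general} applies; this settles the cases $p>3$ (by part~(1)) and $p=3$ with ${\rm d}(G)\geq 4$ (by part~(2)). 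Moreover, Lemma~\ref{cyclic} lets us assume $Z(G)$ cyclic, so (as $G$ is non-abelian) $d_1={\rm d}(Z(G))=1$. There remain only $p=3$ with ${\rm d}(G)=3$ and $p=3$ with ${\rm d}(G)=2$.

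Suppose first ${\rm d}(G)=3$ and, for a contradiction, that $G$ has no non-inner automorphism of order $3$ leaving $\Phi(G)$ elementwise fixed. Substituting $n=p=3$ into the inequalities established in the proof of Theorem~\ref{general} forces $d_2=3d_1=3$ and $d_3=6d_1=6$; as $Z(\Phi(G))$ is abelian this means ${\rm d}(Z(\Phi(G)))={\rm d}(A)=6$, and, using Lemma~\ref{equ}(6) to identify $A^\star=\oms{G}\cap Z(\Phi(G))=Z(\Phi(G))$, inequality (F2) gives ${\rm d}(Z(\Phi(G))/Z(G))\geq 3d_2-3d_1=6$, whereas Lemma~\ref{rg} gives ${\rm rk}(G/Z(G))\leq\binom{4}{2}=6$. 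Since $Z(\Phi(G))/Z(G)\leq G/Z(G)$, all of these are equalities; in particular equality in Lemma~\ref{rg} forces $G'Z(G)/Z(G)$ to have rank exactly $3$. As none of Theorem~\ref{general}(3),(4),(5) can hold, the failure of~(4) gives $\tau_x\neq 0$ for every $x\in G\setminus\Phi(G)$, and the failure of~(5) gives that $Z(G)$ is not a direct factor of $Z(\Phi(G))$. I would now exploit this very rigid configuration: the map $x\mapsto\tau_x$, defined for $x\in G\setminus\Phi(G)$ and taking values in $\ho{A}{A_1}$ by Lemma~\ref{prop}, is a quadratic map over $\mathbb{F}_3$ by Lemma~\ref{prop}(2) and the commutator identities of Lemma~\ref{equ}(1)--(3),(5); combining this with ${\rm d}(A)=6$, ${\rm d}(A_2)=3$, the rank-$3$ statement about $G'Z(G)/Z(G)$, and the fact (Lemma~\ref{equ}(6)) that $x\mapsto x^3$ sends $Z(\Phi(G))$ into the cyclic group $Z(G)$, one should be able to produce either some $x\notin\Phi(G)$ with $\tau_x=0$, or a direct complement of $Z(G)$ in $Z(\Phi(G))$ — each contradicting a failed condition. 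Carrying out this reconciliation, playing the three failed conditions against the class-$3$ commutator calculus, is the main obstacle.

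Finally let ${\rm d}(G)=2$, so again $d_1=1$. Running the mechanism of Theorem~\ref{general} with $n=2$ — i.e.\ Lemma~\ref{dimsder} with $C=A_i$, $D=A_{i-1}$, together with Lemma~\ref{prop}(3),(4) and Lemma~\ref{inner}(5) — yields $d_2\geq 2$, $d_3\geq 2d_2-1$ and $d_3\leq d_2+2$, whence $(d_2,d_3)\in\{(2,3),(2,4),(3,5)\}$; and since $d_3={\rm d}(Z(\Phi(G)))\leq{\rm rk}(G)\leq{\rm rk}(G/Z(G))+{\rm d}(Z(G))\leq\binom{3}{2}+1=4$ (using Lemma~\ref{rg}), the possibility $(3,5)$ is excluded. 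It remains to rule out $(d_2,d_3)\in\{(2,3),(2,4)\}$; here the plan is to use Lemma~\ref{equ} and the structure of $2$-generated $3$-groups of class $3$ — notably that $\gamma_2(G)/\gamma_3(G)$ and $\gamma_3(G)$ are cyclic (the latter since $\gamma_3(G)\leq Z(G)$), that $Z(\Phi(G))\leq\oms{G}$, and that the $\overline{G}$-action on $Z(\Phi(G))$ carries any product of two operators of the form $(\text{conjugation by }x_i)-1$ into the fixed submodule $Z(G)$ — to show that these numerics are incompatible with $C_G(Z(\Phi(G)))=\Phi(G)$, i.e.\ that $G/\Phi(G)$ cannot act faithfully on $Z(\Phi(G))$, which returns us to the case already handled by \cite{DS}. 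A cleaner variant, if it can be pushed through, is to prove directly that ${\rm rk}(G/Z(G))\leq 2$ for every $2$-generated $p$-group of class $3$, which via Theorem~\ref{m1}(1) disposes of ${\rm d}(G)=2$ at once. In any case this last case, like the previous one, reduces to a short but delicate commutator computation; with both ${\rm d}(G)=3$ and ${\rm d}(G)=2$ settled, the theorem follows.
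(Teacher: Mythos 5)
Your reductions are correct and coincide exactly with the paper's: invoking \cite{DS} to assume $C_G(Z(\Phi(G)))=\Phi(G)$, applying Theorem \ref{general} to force $p=3$ and ${\rm d}(G)\in\{2,3\}$, and using Lemma \ref{cyclic} to make $Z(G)$ cyclic. The numerics you then extract ($d_1=1$, $d_2=3$, $d_3=6$ when ${\rm d}(G)=3$; the short list of $(d_2,d_3)$ when ${\rm d}(G)=2$) are also right. But everything after that is a plan, not a proof, and the two cases you leave open are precisely where all the work of the theorem lies. For ${\rm d}(G)=3$ you say one ``should be able to produce'' some $x\notin\Phi(G)$ with $\tau_x=0$; that is indeed the right target (it contradicts part (4) of Theorem \ref{general}), but reaching it is not a soft consequence of the rigid numerics. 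The paper needs a fifteen-step structural analysis culminating in the normal form $\Omega_1(Z(\Phi(G)))=\langle x_1^{3^\alpha}w_1,x_2^{3^\beta}w_2,x_3^{3^\gamma}w_3,\Omega_1(G')\rangle$ with $\alpha\ge\beta\ge\gamma$ and $w_i\in G'$ --- involving $\gamma_3(G)\le (G')^3$, the identification $\Omega_1(\Phi(G))=\{a\in\Phi(G):a^3=1\}$, a basis of $G^3/(G')^3$ adapted to the generators, and $\Omega_1(Z(\Phi(G)))=\Omega_1(\Phi(G))$ --- before it can read off $\tau_{x_3}=0$. None of this is supplied or even outlined by your appeal to ``a quadratic map over $\mathbb{F}_3$.''

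The gap in the ${\rm d}(G)=2$ case is more serious, because the strategy you propose is not the one that works. You hope to show the surviving numerics are ``incompatible with $C_G(Z(\Phi(G)))=\Phi(G)$,'' or alternatively to prove ${\rm rk}(G/Z(G))\le 2$ for every $2$-generated $p$-group of class $3$ and finish via Theorem \ref{m1}. The latter claim is doubtful: in $H=G/Z(G)$ the subgroup $\langle a^{3^\beta},b^{3^\beta},[a,b]\rangle\le Z(H)$ can a priori require three generators, and the paper only gets ${\rm d}(Z(H))=2$ \emph{under the contradiction hypothesis} via Corollary 2.3 of \cite{AB}, after which it still cannot conclude by counting. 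What the paper actually does in this case is constructive: it proves $o(xZ(G))=o(yZ(G))$ for any generating pair, analyses the intersections $\langle a^{3^\beta}\rangle\cap\langle b^{3^\beta}\rangle$ and $\langle a^{3^\beta}\rangle\cap H'$ to choose generators with $\langle a^{3^\beta}\rangle\cap H'=1$, extracts an element $k\in Z_2(G)\setminus Z(G)$ with $k^3=1$ and $k\notin G'$, and exhibits the explicit non-inner automorphism $ux^i\mapsto u(xk)^i$ fixing $\Phi(G)$. That construction is the content of the case, and your proposal does not contain it.
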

\begin{proof}
Suppose, for a contradiction, that $G$ does not satisfy the conclusion of  the
theorem.
It follows from  \cite{DS} that $C_G(Z(\frat(G)))=\frat(G)$ and by Theorem
\ref{general}, we have $p=3$ and
$\di{G}\in\{2, 3\}$, and Lemma \ref{cyclic} implies that  $Z(G)$ is cyclic. Now
we
 distinguish  two cases: Case a: $\di{G}=3$; Case b: $\di{G}=2$.

\textbf{Case a.} Let $\di{G}=3$.

We will prove there exist $x_1,x_2,x_3\in G$, $w_1,w_2,w_3\in G'$ and  positive
integers $\alpha\geq\beta\geq\gamma$ such that
$$(\star) \;\;\;\;\ G=\langle x_1,x_2,x_3\rangle \;\;\text{and}\;\;
\om{Z(\frat(G))}=\langle  x_1^{3^\alpha}w_1, x_2^{3^\beta}w_2,
x_3^{3^\gamma}w_3,\om{G'}\rangle.$$

Suppose that $(\star)$ holds. Then  $[x,y,x_3^{3^{\gamma}}w_3]=1$ for all
$x,y\in\{x_1,x_2,x_3\}$.
Thus $[x,y,x_3]^{3^\gamma}=1$ for all
$x,y\in\{x_1,x_2,x_3\}$.
Now if $a\in \om{Z(\frat(G))}$ then
$a=x_1^{i3^\alpha}x_2^{j3^\beta}x_3^{k3^\gamma}w$ for some $w\in G'$ and
integers $i,j,k$. It follows from Lemma \ref{prop} that
$$\tau_{x_3}(a)=
[x_1^{i3^\alpha}x_2^{j3^\beta}x_3^{k3^\gamma}w,x_3,x_3]=
[x_1,x_3,x_3]^{i3^\alpha}[x_2,x_3,x_3]^{j3^\beta}=1,$$
and so $\tau_{x_3}=0$. This  latter contradicts part 4 of Theorem \ref{general}.

Hence, from now on we are going to show $(\star)$.
For this, we have to prove  several properties as follows. Suppose that
$G=\langle x_1,x_2,x_3 \rangle$.

1) \; $Z_2(G)\leq \frat=\Phi(G)$ and
 $\om{Z_2(G)}\leq Z(\frat)$.

For, by part 3 of Theorem \ref{general} we have
$\tau_x\neq 0$ for every $x\not \in \frat$. This implies that
$Z_2(G)\leq \frat$.
Since $a\in Z(\frat(G))$ if and only if
 $a\in \frat(G)$ and $[a,x^3]=1=[x,y,a]$ for all $x,y\in
G$,  the second assertion follows from Lemma
 \ref{equ}.

2) \; ${\rm d}(Z(G))=1$, ${\rm d}({Z(\frat(G))}\cap Z_2(G))=3$,
 ${\rm d}({Z(\frat(G))})=6$ and
 ${\rm d}(\frac{Z(\frat(G))}{Z(G)})=6$

These   follow from Lemma \ref{cyclic} and part 3 of
Theorem \ref{general}.

3) \; ${\rm d}(\frac{G'Z(G)}{Z(G)})=3$, $\di{G'}=3$ and $\gamma_3(G)\leq
\frat(G')=G'^3$.

For, by Lemma \ref{rg} and part (2), ${\rm rk}(\frac{G}{Z(G)})=6$. The
 proof of Lemma \ref{rg} shows that
${\rm d}(\frac{G'Z(G)}{Z(G)})=3$. Since
 $\gamma_3(G)\leq Z(G)$,  $$G'=\langle [x_1,x_2], [x_1,x_3], [x_2,x_3],
\gamma_3(G)\rangle.$$ Therefore
 $${\rm d}(\frac{G'Z(G)}{Z(G)})\leq {\rm d}(\om{G'})
\leq \di{\om{Z_2(G)}}=\di{\om{Z(\frat)}\cap Z_2(G)}=3.$$
 If $\gamma_3(G)\not\leq \frat(G')$ then
 Burnside's basis theorem implies that  ${\rm d}(\frac{G'Z(G)}{Z(G)})<3$,  a
contradiction.

4) \; If $x,y\in G$ then
$(xy)^3=x^3y^3c^3$, for some $c\in G'$.

5) \; If $x\in G$, $y\in G'$ then $(xy)^3=x^3y^3[y,x]^3$, and so
 $x^3y^3=w^3$ for some $w\in G$.

These parts follow from  Lemma \ref{equ} and part (3) above.

6) \; $G^3=\{x^3|x\in G\}$.

It can be easily verified.

7) \; $\om{\frat}=\{a\in \frat|a^3=1\}$.

For, if $a,b\in \frat$ and  $a^3=b^3=1$ then $a=a_1^3w_1$ for some $a_1\in G$
and $w_1\in G'$ and
so $[b,a,a]=[b,a_1^3w_1,a]=[b^3,a_1,a]=1$. Therefore  $(ab)^3=1$.

8) \; We may assume that $G=\langle x_1,x_2,x_3\rangle$ and
$$\frac{G^3}{G'^3}=\langle\overline{x_1^3}\rangle\times
\langle\overline{x_2^3}\rangle\times
\langle\overline{x_3^3}\rangle,$$
where $\overline{x}=xG'^3$ for any element $x\in G$.

For, first note that if $H=\langle a,b,c\rangle $ is a finite abelian $p$-group
such that $a$ has  the
 maximal  order in group $H$, and $b\langle
a\rangle$ has the  maximal order in $\frac{H}{\langle a\rangle}$ then
$$H=\langle a\rangle\times\langle a^ib\rangle\times\langle a^jb^kc\rangle$$
for some integers $i,j,k$. Thus  one may  assume that
$$\frac{G^3}{G'^3}=\langle\overline{x_1^3}\rangle\times
\langle\overline{x_1^{3i}}\overline{x_2^3}\rangle\times
\langle\overline{{x_1^{3j}}{x_3^{3k}}{x_3^3}\rangle},$$
for some integers $i,j,k$.
Now let $x'_1=x_1, x'_2=x_1^ix_2, x'_3=x_1^jx_2^kx_3$ then by Lemma \ref{equ}
and (4),
 $\{x'_1,x'_2,x'_3\}$ fulfil the conditions.

9)
 \; If $h\in G^3$, $w\in G'$ and $(hw)^3=1$ then $(hw)^3=h^3w^3$ and $hG'^3\in
\om{\frac{G^3}{G'^3}}$.

For, $1=(hw)^3=h^3w^3[w,h]^3=h^3w^3[w,hw]^3=h^3w^3$. Therefore $h^3\in G'^3$.

10) \;
It is clear that there are positive integers $\alpha,\beta,\gamma$ such that
$\om{\frac{G^3}{G'^3}}=\langle\overline{x_1^{3^\alpha}}\rangle\times
\langle\overline{x_2^{3^\beta}}\rangle\times
\langle\overline{x_3^{3^\gamma}}\rangle$. If $a\in \om{\frat(G)}$ then
$a=x_1^{i3^\alpha}x_2^{j3^\beta}x_3^{k3^\gamma}w$ for some $w\in G'$.

For, if $a=hw_0$ then by (9), $hG'^3\in \om{\frac{G^3}{G'^3}}$, thus
$h=x_1^{i3^\alpha}x_2^{j3^\beta}x_3^{k3^\gamma}w_1$ for some $w_1\in G'$.

11) \;
If $h_1,h_2\in G^3$ and $h_1^3, h_2^3\in G'^3$ then  $(h_1h_2)^3=h_1^3h_2^3$.

Since $h_1^3=c^3$ for some $c\in G'$,
$$[h_1^3,h_2]=[c^3,h_2]=[c,h_2^3]=1.$$ It follows that $[h_1,h_2]^3=1$,
 and since $h_2=k^3$ for some $k\in G$, we have $$[h_1,h_2,h_1]=
[h_1,k^3,h_1]=[h_1^3,k,h_1]=1.$$

12) \; If $a, b\in \om{\frat(G)}$ then $[a,b]\in \om{G'}$.

13) \; If $w, w_1 \in G'$, then  $w_1^3=w^3$ if and only if $w_1=ww_0$
 for some $w_0\in\om{G'}$.

Part (12) and (13) can be easily verified.

14) \; $\om{\frat(G)}=\langle  x_1^{3^\alpha}w_1, x_2^{3^\beta}w_2,
x_3^{3^\gamma}w_3,\om{G'}\rangle$, for some $w_1,w_2,w_3\in G'$ where
$\alpha,\beta,\gamma$ are as (10) and  $x_1^{3^{\alpha+1}}=w_1^{-3}$,
 $x_2^{3^{\beta+1}}=w_2^{-3}$ and $x_3^{3^{\gamma+1}}=w_3^{-3}$.

By (10) there are elements $w_1,w_2,w_3\in G'$ such that
$x_1^{3^{\alpha+1}}=w_1^{-3}$,
 $x_2^{3^{\beta+1}}=w_2^{-3}$ and $x_3^{3^{\gamma+1}}=w_3^{-3}$. Let  $a\in
\om{\frat(G)}$.  Then
$a= x_1^{i3^\alpha}x_2^{j3^\beta}x_3^{k3^\gamma}w$ for some $w\in G'$. Thus we
have
$$1\overset{(7)}{=}a^3\overset{(9)}{=}(x_1^{i3^\alpha}x_2^{j3^\beta}x_3^{
k3^\gamma})^3w^3
\overset{(11)}{=}x_1^{i3^{\alpha+1}}x_2^{j3^{\beta+1}}x_3^{k
3^{\gamma+1}}w^3=(w_
1^{-i}w_2^{-j}w_3^{-k}w)^3.$$ Therefore
$$w_1^{i}w_2^{j}w_3^{k}\equiv w \mod \om{G'}.$$
Now note that  $[b,u]\in \om{G'}$ whenever $b^3\in G'$ and $u\in G'$. This
implies that
$$a\equiv
(x_1^{3^\alpha}w_1)^i(x_2^{3^\beta}w_2)^j(x_3^{3^\gamma}w_3)^k \mod \om{G'}.$$

15) \; $\om{Z(\frat(G))}=\om{\frat(G)}$.

By (12),
 $\frac{\om{\frat(G)}}{\om{G'}}$  is an elementary abelian group of rank at
most $3$ and by (2) and (3) ${\rm rk}(\frac{\om{Z\frat(G)}}{\om{G'}})=3$.
Since
$\frac{\om{Z\frat(G)}}{\om{G'}}\leq \frac{\om{\frat(G)}}{\om{G'}}$, the result
follows.

This proves $(\star)$
and completes the proof in  the case (a).

\textbf{Case b.} Suppose that $\di{G}=2$.

Let $H=G/Z(G)$. We first prove that for any two elements $x,y\in G$, we have
$$H=\langle xZ(G),yZ(G)\rangle \Longrightarrow o(xZ(G))=o(yZ(G)) \;\;\;\; (**)$$

Suppose that $o(xZ(G))=3^{\alpha}$ and $o(yZ(G))=3^{\beta}$.
Suppose, for a contradiction, that $\alpha>\beta$. Then, it follows from Lemma
\ref{equ} that
$$[x^{3^\beta},y]=[x,y]^{3^\beta}[x,y,x]^{\frac{3^\beta(3^\beta-1)}{2}}=[x,y]^{3
^\beta}[x,y^{3^\beta},x]^{\frac{3^\beta-1}{2}}=[x,y]^{3^\beta},$$
since $y^{3^\beta}\in Z(G)$.
On the other hand,
$$1=[x,y^{3^\beta}]=[x,y]^{3^\beta}[x,y,y]^{\frac{3^\beta(3^\beta-1)}{2}}=[x,y]^
{3^\beta}[x,y^{3^\beta},y]^{\frac{3^\beta-1}{2}}=[x,y]^{3^\beta}.$$
Hence $[x^{3^\beta},y]=1$ and so $x^{3^\beta}\in Z(G)$, which is a
contradiction since we are assuming $o(xZ(G))=3^\alpha>3^\beta$.

Now let $x,y\in G$ be such that $H=\langle a,b\rangle$, where $a=xZ(G)$ and
$b=yZ(G)$. By $(**)$, $o(a)=o(b)=3^{\alpha}$ for some positive integer $\alpha$.

It is not hard to see (e.g., arguing as in the proof of  Lemma 2.4 of \cite{AB})
 that  $Z(H)=\langle a^{3^\beta},b^{3^\beta},[a,b]\rangle$, where
$o([a,b])=|H'|=3^\beta$ for some positive integer $\beta$.
Also we have $H/Z(H)$ and $H'=\langle [a,b]\rangle$ has the same exponent
$3^{\beta}$. By Corollary 2.3 of \cite{AB}, $d\big(Z(H))=2$ and so
$\alpha>\beta$.

 We first prove that $$\langle a^{3^\beta}\rangle \cap \langle
b^{3^\beta}\rangle=1. \;\;\;(***)$$ Let $a^{3^{\beta}i}=b^{3^{\beta}j}$ for
some $i,j\in \mathbb{Z}$.
 Assume that $i=3^{\beta_1}i'$ and $j=3^{\beta_2} j'$, where
$\gcd(3,i')=\gcd(3,j')=1$ and $\beta_1\geq \beta_2\geq 0$.
 Then $$(a^{3^{\beta_1-\beta_2}i'}b^{-j'})^{3^{\beta+\beta_2}}=a^{3^\beta
i}b^{-3^\beta j}
 [b^{-j'},a^{3^{\beta_1-\beta_2}i'}]^{\frac{3^{\beta+\beta_2}(3^{\beta+\beta_2}-
1)}{2}}=1.$$
 Since $H=\langle a, a^{3^{\beta_1-\beta_2}i'}b^{-j'}\rangle$,  it follows from
$(**)$ that $3^\alpha$ divides
 $3^{\beta+\beta_2}$ and so $a^{3^{\beta}i}=1=b^{3^{\beta}j}$.

Now we prove that either $\langle a^{3^\beta}\rangle \cap \langle
[a,b]\rangle=1$ or $\langle b^{3^\beta}\rangle \cap \langle [a,b]\rangle=1$.

Suppose, for a contradiction, that $a^{3^{\beta_1}}$ and $b^{3^{\beta_2}}$
belong to $\langle [a,b]\rangle$ for some non-negative integers
$\beta_1,\beta_2$ which are less than $\alpha$ and greater or equal to $\beta$.
Therefore $a^{3^{\beta_1}}=b^{3^{\beta_2}i}$ for some integer $i$ or
$a^{3^{\beta_1}j}=b^{3^{\beta_2}}$ for some integer $j$. It follows from
$(***)$ that either $\alpha\leq \beta_1$ or $\alpha\leq \beta_2$, a
contradiction.

By Corollary 2.3 of \cite{AB}, $d\big(Z(H))=2$. It follows that $a^{3^\beta
r}b^{3^\beta s}[a,b]^t=1$ for some integers $r,s,t$ such that
 $[a,b]^t\not=1$ and either $a^{3^\beta r}\not=1$ or $b^{3^\beta s}\not=1$. Let
$r=3^{\beta_1}r'$ and $s=3^{\beta_2}s'$, where $\gcd(r',3)=\gcd(s',3)=1$.
 Without loss of generality, assume that $\beta_2\leq \beta_1$. Then
$$(a^{3^{\beta_1-\beta_2}r'}b^{s'})^{3^{\beta+\beta_2}}\in \langle
[a,b]\rangle.$$

 Since  either $a^{3^\beta r}\not=1$ or $b^{3^\beta s}\not=1$,
$(a^{3^{\beta_1-\beta_2}r'}b^{s'})^{3^{\beta+\beta_2}}$ is a non-trivial
element of $H'$. Clearly $\{a, b'=a^{3^{\beta_1-\beta_2}r'}b^{s'}\}$ generates
$H$. Therefore, replacing $b$ by   $b'$, we obtain that
 $$\langle a^{3^\beta}\rangle \cap H'=1 \;\;\text{and}\;\;\ b^{3^{\alpha-1}}\in
H', \;\;\; (****)$$ since $1\not=b^{3^{\beta+\beta_2}}\in H'$.

Since $Z(G)$ is cyclic, $x^{3^{\alpha}}=y^{3^{\alpha}j}$ for some integer $j$
or  $x^{3^{\alpha}i}=y^{3^{\alpha}}$ for some integer $i$.
Without loss of generality, assume that $x^{3^{\alpha}}=y^{3^{\alpha}j}$. Since
$\alpha>\beta$,  $x^{3^{\alpha-1}}$ and $y^{-3^{\alpha-1}j}$are both in
$Z_2(G)$. Now we may write
$$(x^{3^{\alpha-1}}y^{-3^{\alpha-1}j})^3=x^{3^\alpha}y^{-3^{\alpha}j}
[y^{-3^{\alpha-1}j},x^{3^{\alpha-1}}]^{3}=[y^{-3^{\alpha-1}j},x^{3^{\alpha}}]=1.
$$
Let $k=x^{3^{\alpha-1}}y^{-3^{\alpha-1}j}$. Then $$k^3=1, k\in Z_2(G)\setminus
Z(G) \;\;\text{and}\;\; k\not\in G'. \;\;\;(\diamondsuit)$$ The latter holds:
for if $kZ(G)=a^{3^{\alpha-1}}b^{-3^{\alpha-1}j}\in H'$, then
$a^{3^{\alpha-1}}\in H'$, contradicting $(****)$.

Note that  $C_G(k)$ is a maximal subgroup of
$G$: for the map $x\mapsto [k,x]$ is a group homomorphism from $G$ onto the
cyclic group
$\Omega_1(Z(G))$ of  order $3$ with the kernel $C_G(k)$.
Let $x$ be any element of $G\setminus C_G(k)$.
Since $[k,x]\in Z(G)$ and $k^3=1$ we have
$$(xk)^{3}=x^{3}k^{3}[k,x]^{3}=x^3[k^3,x]=x^3.$$

It is easy to check that the map $\beta$ on $G$ defined by $(ux^i)^\beta=u
(xk)^i$ for all $u\in C_G(k)$
 and all integers $i$, defines an automorphism of order $3$ which leaves
$\Phi(G)$ elementwise fixed.
If $\beta$ were inner, then $\beta$ would be conjugation by some element $g\in
G\setminus Z_2(G)$ with $k=[x,g]$ and so $k\in G'$; contradicting
$(\diamondsuit)$. Thus $\beta$ is not inner. This completes the proof in the
case (b).
\end{proof}

\begin{thm}\label{Z<3}
Suppose that $G$ is a finite $2$-group of class $3$.
\begin{enumerate}
\item
 If $\di{G}\neq 3$ and $Z(G)$ is not cyclic
then $G$ has a non-inner automorphism of order $2$ leaving
$\Phi(G)$ elementwise fixed.
\item
 If $\di{G}=3$ and
$\di{Z(G)}>2$ then  $G$ has a non-inner automorphism of order $2$
leaving $\Phi(G)$ elementwise fixed.
\end{enumerate}
\end{thm}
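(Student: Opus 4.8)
The plan is to argue by contradiction, splitting on $\di{G}$: the two ``large'' cases go just as in Lemma~\ref{cyclic}, while $\di{G}=2$ needs a direct construction. So assume $G$ has no non-inner automorphism of order $2$ leaving $\Phi(G)$ elementwise fixed. As in the proof of Theorem~\ref{c3}, \cite{DS} then yields $C_G(Z(\Phi(G)))=\Phi(G)$, so the standing hypotheses of Section~3 apply; write $n:=\di{G}\ (\geq 2)$ and $d:=\di{\cent}$.

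For part~(2) ($n=3$, $d\geq 3$) and for the case $n\geq 4$ of part~(1) ($d\geq 2$), I would combine Theorem~\ref{m1}(2) with Lemma~\ref{rg} to obtain
\[
\binom{n}{2}d\ \leq\ {\rm rk}\left(\frac{G}{\cent}\right)\ \leq\ \binom{n+1}{2}.
\]
But $\binom{n}{2}d>\binom{n+1}{2}$ whenever $n\geq 4$ and $d\geq 2$ (as $n(n-1)>n(n+1)/2$ for $n>3$), and likewise whenever $n=3$ and $d\geq 3$ (as $3\binom{3}{2}=9>6=\binom{4}{2}$); either way this is a contradiction, which settles part~(2) and all of part~(1) except $\di{G}=2$.

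For $\di{G}=2$ (with $\cent$ non-cyclic, i.e. $d\geq 2$) the estimate above gives nothing, so I would instead produce the automorphism among the central ones. A $2$-generated non-abelian group has $\cent\leq\Phi(G)$ (otherwise a central element outside $\Phi(G)$ would lie in a minimal generating set $\{z,w\}$, making $G=\langle z,w\rangle$ abelian), hence $\om{\cent}\leq\Phi(G)$; for each $f\in\ho{G/\Phi(G)}{\om{\cent}}$ set $\theta_f(g):=g\,f(g\Phi(G))$. Using that $\om{\cent}$ lies in $\cent\cap\Phi(G)$ and that $G/\Phi(G)$ and $\om{\cent}$ are elementary abelian, one checks that $\theta_f$ is an automorphism with $\theta_f^2={\rm id}$ that fixes $\Phi(G)$ elementwise, and that $f\mapsto\theta_f$ is injective; thus $\{\theta_f\}$ is a subgroup of ${\rm Aut}(G)$ of order $|\ho{G/\Phi(G)}{\om{\cent}}|=2^{2d}$. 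Since each $\theta_f$ is trivial on $G/\cent$, an inner $\theta_f$ can only be conjugation by some $g_0$ in $K:=Z_2(G)\cap C_G(\Phi(G))$; conversely every $g_0\in K$ produces an inner member of $\{\theta_f\}$ (here one uses $[G,g_0]\subseteq\om{\cent}$). Hence $\{\theta_f\}$ contains exactly $|K/\cent|$ inner automorphisms. The decisive point — and the only place where $p=2$ is genuinely used — is that $K/\cent$ is elementary abelian: for $g_0\in K$ and all $g\in G$, $[g,g_0]\in\cent$ and $[g,g_0]^2=[g^2,g_0]=1$, so $[g,g_0^2]=1$ and $g_0^2\in\cent$. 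Therefore $|K/\cent|\leq 2^{\di{Z(G/\cent)}}\leq 2^{{\rm rk}(G/\cent)}\leq 2^3$ by Lemma~\ref{rg}, whereas $2^{2d}\geq 2^4$; so some $\theta_f$ is not inner, and being $\neq{\rm id}$ it has order exactly $2$ and leaves $\Phi(G)$ elementwise fixed — the desired contradiction.

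I expect the case $\di{G}=2$ with $p=2$ to be the genuine obstacle. For odd $p$ the parallel situation (Theorem~\ref{c3}, Case~b) is handled by exhibiting one explicit automorphism built from an element $k\in Z_2(G)$ of order $p$ with $(xk)^p=x^p$; for $p=2$ this degenerates, since $(xk)^2=x^2[k,x]\neq x^2$, so that construction collapses and must be replaced by the counting argument above. Its crux is the identity $[g,g_0]^2=[g^2,g_0]$ (valid because $g^2\in\Phi(G)$), which forces $K/\cent$ to be elementary abelian and hence small enough; the rest is the routine but slightly fiddly checking that each $\theta_f$ is a well-defined automorphism of order $2$ and the exact matching of the inner $\theta_f$'s with elements of $K$.
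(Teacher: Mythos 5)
Your proposal is correct, and for $\di{G}>3$ it coincides with the paper's argument: Theorem~\ref{m1}(2) against Lemma~\ref{rg} gives $\binom{n}{2}\di{Z(G)}\leq{\rm rk}(G/Z(G))\leq\binom{n+1}{2}$, which is absurd for $n\geq4$, $\di{Z(G)}\geq2$ (note the paper's displayed bound $\binom{\di{G}-1}{2}\di{Z(G)}$ is evidently a misprint for the $\binom{\di{G}}{2}\di{Z(G)}$ of Theorem~\ref{m1}(2); your reading is the right one). Where you diverge is in the small-rank cases. For $\di{G}=3$ you again quote Theorem~\ref{m1}(2) ($9>6$), while the paper instead invokes inequality (F1), i.e.\ $3\,d_1\leq{\rm rk}(G/Z(G))\leq6$; both work. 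For $\di{G}=2$ the paper again just cites (F1) ($2d_1\leq3$), whereas you rebuild that count from scratch: your $\theta_f$ for $f\in\ho{G/\Phi(G)}{\om{Z(G)}}$ are exactly the automorphisms $\hat\delta$ attached to $\der{\overline G}{A_1}=\ho{\overline G}{A_1}$ (the action on $A_1$ being trivial), your $K=Z_2(G)\cap C_G(\Phi(G))$ coincides with $A^\star\cap Z_2(G)$ under the standing hypothesis $C_G(Z(\Phi(G)))=\Phi(G)$, and your observation that $K/Z(G)$ is elementary abelian of rank at most ${\rm rk}(G/Z(G))\leq3$ reproduces the bound $d(\frac{A^\star\cap Z_2(G)}{Z(G)})\leq{\rm rk}(G/Z(G))$. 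All the verifications you flag as ``fiddly'' do go through ($\theta_f$ is an order-$2$ automorphism fixing $\Phi(G)$; the inner ones correspond bijectively to $K/Z(G)$ via $[g,g_0]^2=[g^2,g_0]=1$). So the route is logically the same counting argument, but your version of the $\di{G}=2$ case is self-contained and more transparent than the paper's terse appeal to (F1), at the cost of redoing by hand what Section~2--3 already provides; you are also right to note the reduction via \cite{DS} to $C_G(Z(\Phi(G)))=\Phi(G)$, which the paper leaves implicit here.
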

\begin{proof}
Suppose to the contrary that $G$ has no non-inner automorphism of
order $2$. By part (2) of  Theorem \ref{m1} and Lemma \ref{rg} one
has
$$\binom{{\rm d}(G)+1}{2}\geq
{\rm rk}(\frac{G}{\cent})\geq\binom{{\rm d}(G)-1}{2}\di{Z(G)},$$
that implies the result when $\di{G}>3$. If $\di{G}=2, 3$, the
result follows by applying Lemma \ref{rg} and  inequality (F1) in
the proof of Theorem \ref{general}.
\end{proof}
\begin{thm}
Suppose that $G$ is a finite $2$-group of class $3$.
 If $\di{G}>4$ then
$G$ has a non-inner
automorphism of order $2$ leaving $\Phi(G)$ elementwise fixed.
\end{thm}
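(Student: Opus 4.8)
Assume, for a contradiction, that $G$ has no non-inner automorphism of order $2$ leaving $\frat(G)$ elementwise fixed. It follows from \cite{DS} that $C_G(Z(\frat(G)))=\frat(G)$, so the notation and the inequalities (F1)--(F3) from the proof of Theorem \ref{general} are available (note $\om{Z(\frat(G))}\le G=Z_3(G)$ since $G$ has class $3$), with $n:=\di{G}\ge5$ and, as there, $d_i:=\di{A_i}$, $A:=\om{Z(\frat(G))}$. Since $n\neq3$, part $(1)$ of Theorem \ref{Z<3} forbids $\cent$ from being non-cyclic, hence $\cent$ is cyclic and $d_1=1$.

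Next I bound $d_2$. If $a\in A^\star\cap Z_2(G)$ then $[G,a]\le[G,Z_2(G)]\le\cent$, so the inner derivation $\varphi_a\colon\overline G\to Z(\frat(G))$ takes all its values in $\cent$ and, because $[G,a]\le\cent$, is in fact a group homomorphism from $\overline G$ to $\cent$. As $\overline G$ is elementary abelian of rank $n$ and $\cent$ is cyclic, $\ho{\overline G}{\cent}$ has order $2^{n}$; hence, by part $(2)$ of Lemma \ref{inner},
$$\di{\frac{A^\star\cap Z_2(G)}{\cent}}=\di{\inder{\overline G}{A^\star\cap Z_2(G)}}\le\di{\ho{\overline G}{\cent}}=n .$$
Since also $\di{\frac{A^\star\cap Z_2(G)}{\cent}}\ge d_2-d_1=d_2-1$, and (F1) gives $\di{\frac{A^\star\cap Z_2(G)}{\cent}}\ge nd_1=n$, we conclude $n\le d_2\le n+1$.

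It remains to derive a contradiction from $d_1=1$, $n\le d_2\le n+1$ and $n\ge5$. Substituting $d_1=1$ into (F2) gives $d_3\ge nd_2-\binom{n+1}{2}\ge\binom{n}{2}$, so $A\supsetneq A_2$ (as $\binom{n}{2}>n+1$ when $n\ge5$), and the plan is to push this lower bound on $d_3$ past the ceiling $d_3-1\le\di{A^\star/\cent}\le\rk{\frac{G}{\cent}}\le\binom{n+1}{2}$ supplied by Lemma \ref{rg}, in conjunction with (F3). Where the crude inequalities (F2)--(F3) are not quite decisive, the extra room should come from the $\overline G$-module structure of $A$: the chain $A\supseteq A_2\supseteq A_1\supseteq1$ is $\overline G$-invariant with trivial action on each successive quotient --- indeed $[A,G]\le[Z_3(G),G]\le Z_2(G)$ forces $[A,G]\le A_2$, and $[A_2,G]\le[Z_2(G),G]\le\cent$ forces $[A_2,G]\le A_1$ --- so every $\tau_x=[\,\cdot\,,x]$ drops this filtration by exactly one step, the family $\{\tau_{x_i}\}_{i=1}^{n}$ commutes, and all triple products $\tau_{x_i}\tau_{x_j}\tau_{x_k}$ vanish; this rigidity trims the number of genuinely independent compatibility equations $[b_i,x_j]=[b_j,x_i]$ that cut out $\der{\overline G}{A}$ and $\der{\overline G}{A_2}$. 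The main obstacle is converting this structure into the required strict inequality. For odd primes (Theorem \ref{c3}) the corresponding argument eventually exhibits an $x\notin\frat(G)$ with $\tau_x=0$ and contradicts part $(4)$ of Theorem \ref{general}; but for $p=2$ the trace $\tau_x$ is only a single commutator map, which need not annihilate $A_2$ and cannot be forced to vanish off $\frat(G)$, so here the contradiction has to be obtained entirely by counting --- and that is precisely why one needs $\di{G}>4$ rather than merely $\di{G}\ge3$.
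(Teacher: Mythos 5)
Your proof is not complete: after establishing $d_1=1$ and $n\le d_2\le n+1$, you concede that (F2)--(F3) give only $d_3\ge\binom{n}{2}$, which never exceeds the ceiling $\binom{n+1}{2}$, and the remainder of your argument is a plan (``the extra room should come from the $\overline G$-module structure\dots this rigidity trims the number of genuinely independent compatibility equations'') rather than a proof. That is precisely the decisive step, and it is missing; nothing you have written forces a contradiction for $n\ge 5$. The filtration observation $[A,G]\le A_2$, $[A_2,G]\le A_1$ is true but you never convert it into an improved count, and it is not clear how it would yield one along the lines you sketch.

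The paper closes the argument by a different choice of coefficient subgroup in Lemma \ref{dimsder}. Since $Z(G)$ is cyclic (which you did obtain correctly from Theorem \ref{Z<3}), Theorem \ref{m1}(2) gives $\binom{n}{2}\le{\rm rk}(G/Z(G))\le {\rm d}(G/G'Z(G))+{\rm d}(G'Z(G)/Z(G))$, whence ${\rm d}(G')\ge\binom{n}{2}-n$: the derived subgroup itself must be large. One then applies Lemma \ref{dimsder} with $C=\om{G'}$ and $D=\om{Z(G)}$; this is legitimate because $\gamma_3(G)\le Z(G)$ gives $[\om{G'},x_j]\le\om{Z(G)}$, and since $\om{Z(G)}$ has rank $1$, each $\ker(\tau_{x_j})\cap\om{G'}$ has codimension at most $1$ in $\om{G'}$. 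The count becomes $\binom{n+1}{2}\ge{\rm rk}(G/Z(G))\ge\di{\der{\overline G}{\om{G'}}}\ge n({\rm d}(G')-1)-\binom{n}{2}\ge n\left(\binom{n}{2}-n-1\right)-\binom{n}{2}$, i.e.\ $n^2-4n-2\le 0$, forcing $n\le 4$. The essential idea you are missing is to exploit the size of $G'$ (not of $A_2$ or $A_3$) together with the cyclicity of $Z(G)$ to make the correction term $\binom{n}{2}{\rm d}(D)$ in Lemma \ref{dimsder} as small as possible; your bounds on $d_2$ and $d_3$, while correct, cannot by themselves beat that correction term.
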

\begin{proof}
Suppose to the  contrary that $G$ has no non-inner automorphism of
order $2$. By
 Theorem \ref{Z<3},  $Z(G)$ is  cyclic  and  by  Lemma \ref{rg}
 and part (2) of Theorem \ref{m1}
$$
\binom{{\rm d}(G)}{2}\leq {\rm rk}(\frac{G}{Z(G)})\leq{\rm
d}(\frac{G}{G'Z(G)})+ {\rm d}(\frac{G'Z(G)}{Z(G)}),
$$
therefore
$${\rm d}(G')\geq{\rm d}(\frac{G'Z(G)}{Z(G)})\geq\binom{{\rm
d}(G)}{2}-\di{G}.$$ Since $\om{G'}\leq Z(\Phi(G))$  by applying
Lemma \ref{dimsder} for $C=\om{G'}$ and $D=\om{Z(G)}$ one has
\begin{align*}
\di{\der{\overline{G}}{\om{G'}}}
\geq\sum_{j=1}^{\di{G}}\di{\ker(\tau_{x_j})\cap
\om{G'}}-\binom{\di{G}}{2}\di{Z(G)}.
\end{align*}
and since $[\om{G'},x_j]\leq \om{Z(G)}$,   $1\leq i\leq \di{G}$
one has  $\di{\ker(\tau_{x_j})\cap \om{G'}}\geq \di{G'}-1$ and  by
Lemma \ref{rg}
\begin{align*}
{\rm rk}(\frac{G}{Z(G)})&\geq \di{\der{\overline{G}}{\om{G'}}} \geq \di{G}(\di{G'}-1)-\binom{\di{G}}{2}\Longrightarrow\\
\binom{{\rm d}(G)+1}{2}&\geq \di{G}(\binom{\di{G}}{2}-\di{G}-1)-\binom{\di{G}}{2}\Longrightarrow\\
0&\geq \di{G}^2-4\di{G}-2.
\end{align*}
therefore $\di{G}\leq 4$. This contradiction completes the proof.
\end{proof}


}

\end{document}